% Lower-case    a b c d e f g h i j k l m n o p q r s t u v w x y
% Digits        0 1 2 3 4 5 6 7 8 9\longleftrightarrow
% Exclamation   !           Double quote "          Hash(number) #
% Dollar        $           Percent      %          Ampersand     &
% Acute accent  '           Left paren  (          Right paren   )
% Asterisk      *           Plus         +          Comma         ,
% Minus         -           Point        .          Solidus       /
% Colon         :           Semicolon    ;          Less than     <
% Equals        =           Greater than >          Question mark ?
% At            @           Left bracket [          Backslash     \
% Right bracket ]           Circumflex   ^          Underscore    _
% Grave accent  `           Left brace   {          Vertical bar  |
% Right brace   }           Tilde        ~
%%homnote.tex (82/3/8), Uses EM-TEX/LATEX209 or latest + line emulation%%
%%\documentstyle[11pt]{article}
%%- Set up for A4 paper %%
\documentclass[11pt]{article}
\usepackage{amssymb}
\usepackage{amsmath}
\usepackage{amsfonts}
\oddsidemargin   = 0 cm \evensidemargin  = 0 cm \textwidth  = 14
cm \textheight = 22 cm \headheight=0cm \topskip=0cm \topmargin=0cm
%%%%%%%%%%%%%%%%%%%%%%%%%%%%%%%%%%%%%%%%%%%%%%%%%%%%%%%%%%%%%%%%%%%%%
\newtheorem{precor}{{\bf Corollary}}

%zxzxzx
\newtheorem{precon}{{\bf Conjecture}}

\newtheorem{predefin}{{\bf Definition}}

\newenvironment{defin}[1]{\begin{predefin}{\hspace{-0.5
                   em}{\bf.\ }}{\rm #1}\hfill{$\blacktriangle$}}{\end{predefin}}
\newtheorem{preexm}{{\bf Example}}

\newenvironment{exm}[1]{\begin{preexm}{\hspace{-0.5
                  em}{\bf.\ }}{\rm #1}\hfill{$\blacklozenge$}}{\end{preexm}}

\newtheorem{preappl}{{\bf Application}}

\newtheorem{prelem}{{\bf Lemma}}

\newtheorem{preproof}{{\bf Proof.\ }}

\newenvironment{proof}[1]{\begin{preproof}{\rm
               #1}\hfill{$\blacksquare$}}{\end{preproof}}
\newtheorem{preclm}{{\bf Claim}}

\newtheorem{prethm}{{\bf Theorem}}

\newenvironment{thm}{\begin{prethm}{\hspace{-0.5
               em}{\bf.\ }}}{\end{prethm}}
\newtheorem{prealphthm}{{\bf Theorem}}

%%%%%%%%%%%%%%%%%%%%
%
\newtheorem{prealphlem}{{\bf Lemma}}

\newtheorem{prepro}{{\bf Proposition}}

\newenvironment{pro}{\begin{prepro}{\hspace{-0.5
               em}{\bf.\ }}}{\end{prepro}}
\newtheorem{prequ}{{\bf Question}}

%%%%%%%%%%%%%%%%%%%%%%%%%%
%
\newtheorem{prealphqu}{{\bf Question}}

%%%%%%%%%%%%%%%%%%%%%%%%%%
\newtheorem{preprb}{{\bf Problem}}

%%%%%%%%%%%%%%%%%%%%%%%%%%%%%%%%%%%%%%%%%%%%%%%%%%
\newcommand\bigzero{\makebox(0,0){\text{\Large0}}} 
%%%%%%%%%%%%%%%%%%%%%%%%%%%%%%%%
\def\conct[#1,#2]{\mbox {${#1} \leftrightarrow {#2}$}}
\def\dconct[#1,#2]{\mbox {${#1} \rightarrow {#2}$}}
\def\deg[#1,#2]{\mbox {$d^{^{#1}}(#2)$}}
\def\mindeg[#1]{\mbox {$\delta^{^{#1}}$}}
\def\maxdeg[#1]{\mbox {$\Delta^{^{#1}}$}}
\def\outdeg[#1,#2]{\mbox {$d^{^{#1}}_{_+}(#2)$}}
\def\minoutdeg[#1]{\mbox {$\delta^{^{#1}}_{_+}$}}
\def\maxoutdeg[#1]{\mbox {$\Delta^{^{#1}}_{_+}$}}
\def\indeg[#1,#2]{\mbox {$d^{^{#1}}_{_-}(#2)$}}
\def\minindeg[#1]{\mbox {$\delta^{^{#1}}_{_-}$}}
\def\maxindeg[#1]{\mbox {$\Delta^{^{#1}}_{_-}$}}
\def\isdef{\mbox {$\ \stackrel{\rm def}{=} \ $}}
\def\dre[#1,#2,#3]{\mbox {${\cal E}^{^{#3}}(#1,#2)$}}
\def\var[#1,#2]{\mbox {${\rm Var}^{^{#1}}(#2)$}}
\def\ls[#1]{\mbox {$\xi_{_{#1}}$}}
\def\hom[#1,#2]{\mbox {${\rm Hom}({#1},{#2})$}}
\def\onvhom[#1,#2]{\mbox {${\rm Hom^{v}}(#1,#2)$}}
\def\onehom[#1,#2]{\mbox {${\rm Hom^{e}}(#1,#2)$}}
\def\core[#1]{\mbox {$#1_{_{\bullet}}$}}
\def\cay[#1,#2]{\mbox {${\rm Cay}({#1},{#2})$}}
\def\cays[#1,#2]{\mbox {${\rm Cay_{s}}({#1},{#2})$}}
\def\dirc[#1]{\mbox {$\stackrel{\rightarrow}{C}^{^{#1}}$}}
\def\cycl[#1]{\mbox {${\bf Z}^{^{#1}}$}}

%%%%%%%%%%%%%%%%%%%%%%%%%%%%%%%%%%%%%%%%%%%%%%%%%%%%%%%%%%%%%%%%%%%%%
\date{}
\usepackage{color}
\usepackage{blkarray}
\usepackage{calc}
%%
%% The tikz package is used for doing the actual drawing.
\usepackage{tikz}
%%
%% In order to be able to put arrowheads in the middle of directed edges, we need an extra library.
\usetikzlibrary{decorations.markings}
%%
%% The next line says how the "vertex" style of nodes should look: drawn as small circles.
\tikzstyle{vertex}=[circle, draw, inner sep=0pt, minimum size=6pt]
%%
%% Next, we make a \vertex command as a shorthand in place of \node[vertex} to get that style.
\newcommand{\vertex}{\node[vertex]}
%%
%% Finally, we declare a "counter", which is what LaTeX calls an integer variable, for use in
%% the calculations of angles for evenly spacing vertices in circular arrangements.
\begin{document}
%\maketitle
%%%%%%%%%%%%%%%
\begin{center}
{\Large \bf On the Spectra of Symmetric \\ Cylindrical Constructs }\\
\vspace*{0.5cm}
{\bf  Amir Daneshgar}\\
{\it Department of Mathematical Sciences}\\
{\it Sharif University of Technology}\\
{\it P.O. Box {\rm 11155-9415}, Tehran, Iran}\\
{\tt daneshagr@sharif.ir}\\
{\bf  Ali Taherkhani}\\
{\it Department of Mathematics}\\
{\it Institute for Advanced Studies in Basic Sciences }\\
{\it P.O. Box {\rm 45195-1159}, Zanjan {\rm 45195}, Iran}\\
{\tt ali.taherkhani@iasbs.ac.ir}\\
\end{center}
\begin{abstract}
\noindent 
In this article, following [A.~Daneshgar, M.~Hejrati, M.~Madani, {\it On cylindrical graph construction and its applications}, EJC, 23(1) p1.29, 45, 2016] we study the spectra of symmetric cylindrical constructs, generalizing some well-known results on the spectra of a variety of graph products, graph subdivisions by V.~B.~Mnuhin (1980) and the spectra of GI-graphs (see [M.~Conder, T.~Pisanski, and A.~{\v{Z}}itnik, {\it GI-graphs: a new class of graphs with many symmetries}, 40, 209--231 (2014)] and references therein). In particular, we show that for bsymmetric 
cylinders with no internal vertex the spectra is actually equal to the eigenvalues of a perturbation of the base, and using this, we study the spectra of sparsifications of complete graphs by tree-cylinders. We also, show that a specific 
version of this construction gives rise to a class of highly symmetric graphs as a generalization of Petersen and Coxeter graphs. \\ \ \\
\noindent {\bf Keywords:}\ {Graph spectra, Ramanujan graph, cylindrical construction, expander.}\\
{\bf Subject classification: 05C}
\end{abstract}
% set various sizes:
\section{Introduction}

Nowadays, {\it graph amalgams} is a classic chapter in graph theory while study of graph products and their spectra goes back to the early days of graph theory \cite{hakl}. The cylindrical graph construction as a generalized edge-replacement procedure, introduced in \cite{dama} (also see \cite{madani}), is a graph amalgam 
that unifies a large number of graph constructions including different kinds of products. On the other hand, study of graphs through assigning matrices that encode structural properties is also and old section 
of graph theory with a large intersection with discrete spectral geometry and its methods. These along with the main duality proved in \cite{dama} leads to the following 
basic question:\ \\

``How far the cylindrical amalgam generalizes the tensor product of matrices and its properties?" \ \\

From another point of view, looking for highly connected relatively sparse graphs is a highlight in graph theory, not only because of introducing interesting and deep problems  but also for its foundational impact in modern computer science and its applications (e.g. see \cite{avi}). In this regard, strictly speaking,  the theory of expander graphs looks
for methods of constructing highly connected sparse graphs with a relatively low spread of the spectrum. Hence, again, cylindrical construction, as a method that can be applied to sparsify highly connected dense graphs is a procedure that ought to be studied from a spectral geometric viewpoint. This approach also raises the following question: \ \\

``What can be said about the spectra of a cylindrical construct based on the spectra of the base-graph and the cylinder?"\ \\

Considering classical results in spectral geometry and recent nonconstructive results of Marcus, Spielman
and Srivastava \cite{mss} on the existence of Ramanujan graphs as iterated $2$-lifts,
provide sufficient evidence supporting  that construction of extremal highly connected sparse graphs 
are closely related to noncommutative structures where these subjects and corresponding algebras constitute a large and active area of research in modern geometry. However,
again considering $2$-lifts as cylindrical constructs gives rise to the following question:\ \\

``How far can one get close to the extremal constructions through commutative substructures?"\ \\

This article is an attempt to give some partial answers to above mentioned questions.
In Section~\ref{sec:sym}, on the one hand,  we consider a subclass of cylindrical constructs that somehow can be considered as generalized tensor products. Our basic goal in this section is to introduce a general result (Theorem~\ref{thm:main}) that describes the spectrum of the construct in terms of the spectra of its components.
On the other hand, in this regard, we introduce the concept of a {\it commutative $k$-decomposition} where in our main theorem the base-graph is decomposed into a commutative set of subgraphs by the corresponding labeling induced by the twists. 
We not only deduce a number of known results as corollaries of our main theorem, but also, in our opinion, another interesting aspect of Theorem~\ref{thm:main} is to introduces a deep connection between the theory of graph decompositions and the theory of graph spectra.

In the last section, using the concept of tree-cylinders introduced in \cite{madani},
we consider a special family of cylindrical constructions in which complete graphs are sparsified by regular tree-cylinders. In this section we analyze the spectra of such constructions in detail and we will see that such a study is closely related to the study of the spectra of regular trees in which leaves are connected to each other in a 
predefined way which is related to the twists of the construction. In this regard, we introduce a class of highly symmetric sparse graphs whose spectra is equal to the spectra of perturbed regular trees. This may also be interesting when one considers a recent result of N.~Alon \cite{alon1} (also see \cite{alon}) in which 
it is proved that there exist highly connected sparse graphs within the family of graphs constructed by adding 
random edges to a regular tree connecting leaves to their ancestors.

To add a couple of words on notations, we note that hereafter we only deal with finite simple graphs which are referred to using math-Roman font as $\mathrm{H}$, where matrices are referred to by italic-Roman  font as $H$. 
In the sequel, $\mathrm{K}_{_{n}}$ is the complete graph on $n$ vertices and $\mathrm{P}_{_{n}}$ is the path of length $n-1$.

The symbols $I$ and $J$ stand for the identity matrix and the all-one matrix, respectively, and we define 
$$\bar{I} \isdef \left[
\begin{array}{ccc}
\bigzero&&1 \\
&\reflectbox{$\ddots$}&\\
1&&\bigzero
\end{array}\right]$$
where the dimensions of these matrices are assumed to be clear from the context.
The transpose of a matrix $H$ is denoted by $H^*$ and the characteristic polynomial of a matrix $H$ 
in terms of the variable $x$ is defined as $\phi(H,x) \isdef \det(xI-H)$.

A $2 \times 2$ block matrix 
$$A = \left[
\begin{array}{cc}
A_{_{1,1}}&A_{_{1,2}} \\
A_{_{2,1}}&A_{_{2,2}}
\end{array}\right]$$
is said to be {\it bsymmetric}  if all blocks are symmetric matrices, $A_{_{1,1}}=A_{_{2,2}}$ and $A_{_{1,2}}=A_{_{2,1}}$. Clearly, the class of bsymmetric matrices of dimension $k$ constitute a subalgebra of the algebra of symmetric matrices 
which is denoted by ${\cal BS}_{_{k}}$. In this setting  ${\cal BS}$ refers to the 
whole class of bsymmetric matrices.

Moreover, let us recall a standard determinant identity for block matrices as follows;
$$\det \left[
\begin{array}{cc}
A_{_{1,1}}&A_{_{1,2}} \\
A_{_{2,1}}&A_{_{2,2}}
\end{array}\right]= \det (A_{_{1,1}}-A_{_{1,2}} A^{-1}_{_{2,2}} A_{_{2,1}}) \det (A_{_{2,2}}),$$
whenever $A_{_{2,2}}$ is an invertible matrix.

\section{Cylindrical constructs and their spectra}

Cylindrical constructs and their dual are introduced in \cite{dama} (also see \cite{madani} and references therein)
as a general edge-replacement graph construction unifying many well-known constructions 
so far and introducing a couple of new ones which deserve closer study.

In this article we will only be dealing with cylindrical constructions which are built on simple graphs using symmetric cylinders, where our major focus is on the linear algebra behind this construction. Hence, we disregard the definition of the construction 
in its most general form as it is available in \cite{dama} for the interested reader to refer to, while we provide an equivalent definition of the same concept in the special 
case appearing in this article which is in terms of the corresponding adjacency matrices
of the base-graph and the matrices assigned to the corresponding cylinders. This approach  is definitely more convenient for our linear-algebraic study and also will make our comparison to other matrix  operations as tensor products etc. easier. 

\subsection{Symmetric cylinders and their corresponding matrices}\label{sec:sym}

Since, in this article, we will only  be dealing with bsymmetric cylinders with non-intersecting bases (see \cite{dama} for the definition in a general case), in what follows a cylinder $\mathrm{H}$ will be  a simple graph,
\begin{itemize}
	\item along with two disjoint and isomorphic vertex-labeled subgraphs $$\mathrm{B}=(V(\mathrm{B})=\{b_{_{0}},b_{_{1}},\cdots,
	b_{_{\ell}}\},E(\mathrm{B}))$$ and 
	$$\mathrm{B'}=(V(\mathrm{B'})=\{b'_{_{0}},b'_{_{1}},\cdots,
	b'_{_{\ell}}\},E(\mathrm{B'}))$$
	for some integer $\ell \geq 0$, called bases,
	\item and an automorphism $\sigma \in Aut(\mathrm{H})$ such that
	       \begin{itemize}
	       	\item[{\rm a)}] We have  $\sigma(\mathrm{B})=\mathrm{B'}$ and $\sigma(\mathrm{B'})=\mathrm{B}$,
	       	\item[{\rm b)}]  For every $1 \leq i \leq \ell$ we have $\sigma(b_{_{i}})=b'_{_{i}}$ and $\sigma(b'_{_{i}})=b_{_{i}}$.
	       \end{itemize}
\end{itemize} 
Vertices that do not appear in $V(\mathrm{B} \cup \mathrm{B'})$ are called {\it inner-vertices}, and clearly, by the symmetry of the concept, at times, bases may not be distinguished in our arguments below.
Based on this definition, in what follows, we refer to a cylinder $\mathrm{H}$ as
$\mathrm{H}=(\mathrm{B},\mathrm{C})$, where $\mathrm{B}$ is the base subgraph and $\mathrm{C}$ is the subgraph induced on the inner-vertices.
In this setting, the adjacency matrix of a cylinder $\mathrm{H}=(\mathrm{B},\mathrm{C})$ is referred to as the following block matrix,
$$H=
\begin{blockarray}{cccc}
& \mathrm{B} &  \mathrm{B'}&  \mathrm{C} \\
\begin{block}{c[ccc]}
\mathrm{B}&B&E^{^{bb'}}&E^{^{bc}}\\ 
\mathrm{B'}&(E^{^{bb'}})^*&B&E^{^{b'c}}\\ 
{\mathrm{C}}&(E^{^{b{c}}})^*&(E^{^{b'{c}}})^*&C\\
\end{block}
\end{blockarray}.$$
Note that the set of inner-vertices may be empty in which case the corresponding rows and columns are excluded from the adjacency matrix. Also, in this setup, $\mathrm{B}$
and $\mathrm{B}'$ refer to the subgraphs induced on the bases, respectively, and the 
blocks of the matrix $H$ refer to the corresponding adjacency structures.

Clearly, if $\Sigma$ is the permutation matrix corresponding to the automorphism $\sigma$ partitioned as the following block matrix,
$$\Sigma=\left[
\begin{array}{ccc}
0&I&0\\ 
I&0&0\\ 
0&0&P
\end{array}\right],$$
then by the equality $\Sigma H \Sigma^* = H$ we have 
$$E^{^{{bb'}}}=(E^{^{bb'}})^*, \ E^{^{{b'c}}}=E^{^{{bc}}}P, \ E^{^{{bc}}}=E^{^{{b'c}}}P \ 
{\rm and} \ PC=CP.$$

\begin{exm}{\label{exm:bsc}{\bf A couple of symmetric cylinders}

In this example we review some very basic bsymmetric cylinders.
To start, consider the path cylinder $\mathrm{P}_{_{k+2}}$ depicted in Figure~\ref{fig:path}
whose bases are isomorphic to $\mathrm{K}_{_{1}}$ and and the subgraph induced on the inner-vertices  $\mathrm{C}$ is isomorphic to $\mathrm{P}_{_{k}}$. Clearly, a cylindrical construction 
using this bsymmetric cylinder gives rise to the well-known subdivision construction. 
Explicitly, in this case, for $k\geq 1$, we have $E^{^{{bb'}}}=0$, 
$E^{^{{bc}}}=[\begin{array}{cccc}
1&0&\cdots&0
\end{array}]_{_{1\times k}}$ 
and
$E^{^{{b'c}}}=[\begin{array}{cccc}
0&\cdots&0&1
\end{array}]_{_{1\times k}}.$

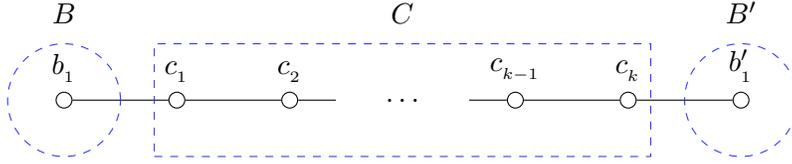
\begin{figure}[t]\label{fig:path}

 \[\begin{tikzpicture}[x=1.5cm, y=1.5cm]
 \node (b) at (1,0.5) [label=above:$B$] {};
 \node (c) at (4,0.5) [label=above:$C$] {};
 \vertex (b1) at (1,0) [label=above:$b_{_1}$] {};
 \vertex (c1) at (2,0) [label=above:$c_{_1}$] {};
 \vertex (c2) at (3,0) [label=above:$c_{_2}$] {};
 \node (c3) at (4,-0.2) [label=above:$\ldots$] {};
 \node (c32) at (3.5,0) [label=above:$$] {};
 \node (c34) at (4.5,0) [label=above:$$] {};
 \vertex (c4) at (5,0) [label=above:$c_{_{k-1}}$] {};
 \vertex (c5) at (6,0) [label=above:$c_{_{k}}$] {};
 \vertex (b'1) at (7,0) [label=above:$b'_{_1}$] {};
 \node (b) at (7,0.5) [label=above:$B'$] {};
 \path 
 (b1) edge (c1)
 (c2) edge (c1)
 (c32) edge (c2)
 (c34) edge (c4)
 (c5) edge (c4)
 (c5) edge  (b'1);
 \draw[color=blue!80,dashed](1,0) circle (0.5);
 \draw[color=blue!80, dashed] (7,0) circle (0.5);
  \draw[color=blue!80, dashed] (1.8,-0.5) -- (6.2,-0.5) -- (6.2,0.5) -- (1.8,0.5) -- (1.8,-0.5);
t\end{tikzpicture}\]
  
\caption{The path cylinder $\mathrm{P}_{_{k+2}}$(see Example~\ref{exm:subdiv}).}	
\end{figure}
\begin{figure}[h]
 \[\begin{tikzpicture}[x=1.5cm, y=1.5cm]
 \node (b) at (1,1.2) [label=above:$B_1$] {};
 \vertex (b1) at (1,0) [label=above:$$] {};
 \vertex (c1) at (2,0) [label=above:$$] {};
 \vertex (b2) at (1,1) [label=above:$$] {};
 \vertex (c2) at (2,1) [label=above:$$] {};
 \node (b) at (2,1.2) [label=above:$B'_1$] {};
 \path 
 (b1) edge (c1)
 (c2) edge (b2);
 \draw[color=blue!80, dashed] (1,0.5) ellipse (0.3 and 0.75);
 \draw[color=blue!80, dashed] (2,0.5) ellipse (0.3 and 0.75);
 \node (b) at (3,1.2) [label=above:$B_2$] {};
 \vertex (b1) at (3,0) [label=above:$$] {};
 \vertex (c1) at (4,0) [label=above:$$] {};
 \vertex (b2) at (3,1) [label=above:$$] {};
 \vertex (c2) at (4,1) [label=above:$$] {};
 \node (b) at (4,1.2) [label=above:$B'_2$] {};
 \path 
 (b2) edge (c1)
 (b1) edge (c2);
 \draw[color=blue!80, dashed](3,0.5) ellipse (0.3 and 0.75);
 \draw[color=blue!80, dashed](4,0.5) ellipse (0.3 and 0.75);
 \end{tikzpicture}\]
\caption{\label{fig:id} The identity cylinder and its twist related to random $2$-lifts (see \cite{mss}).}	
\end{figure}
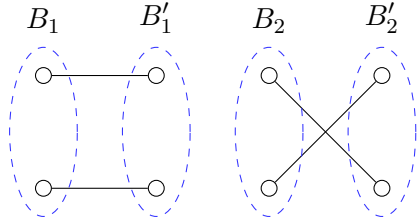

On the other hand, consider the identity cylinder and its twist depicted in Figure~\ref{fig:id}. Based on a breakthrough of Marcus, Spielman and Srivastava \cite{mss}
a random choice of these cylinders in a cylindrical construct gives rise to regular Ramanujan graphs of arbitrary degree. Note that these pair of cylinders constitute a coherent set of bsymmetric cylinders for which we have $E_{_{1}}^{^{{bb'}}}=I,E_{_{2}}^{^{{bb'}}}=\bar{I}$ (for the definition see \cite{dama,madani} or the paragraph preceding the next proposition).

As the third example, consider the $\pi$-cylinders $\sqcap$ and $\sqcup$ (see Figure~\ref{fig:pi})
that clearly form a coherent set of bsymmetric cylinders. The main point of using these cylinders is that they can be used to sparsify dense regular graphs of even degree. This as a special case of tree-cylinders 
will be discussed in Section~\ref{sec:sparsifier} in more detail (see \cite{madani}).
Note that for these cylinders we have $E_{_{1}}^{^{{bb'}}}=\left[\begin{array}{cc}
0&0\\
0&1
\end{array}\right]$ 
and
$E_{_{2}}^{^{{bb'}}}=\left[\begin{array}{cc}
1&0\\
0&0
\end{array}\right]$.
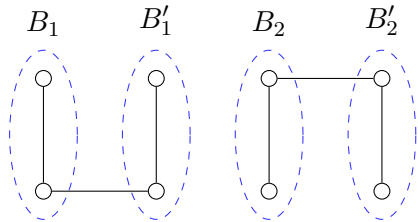
\begin{figure}[b]
 \[\begin{tikzpicture}[x=1.5cm, y=1.5cm]
 \node (b) at (1,1.2) [label=above:$B_1$] {};
 \vertex (b1) at (1,0) [label=above:$$] {};
 \vertex (c1) at (2,0) [label=above:$$] {};
 \vertex (b2) at (1,1) [label=above:$$] {};
 \vertex (c2) at (2,1) [label=above:$$] {};
 \node (b) at (2,1.2) [label=above:$B'_1$] {};
 \path 
 (b1) edge (c1)
 (c2) edge (c1)
 (b1) edge (b2);
 \draw[color=blue!80, dashed] (1,0.5) ellipse (0.3 and 0.75);
 \draw[color=blue!80, dashed] (2,0.5) ellipse (0.3 and 0.75);
 \node (b) at (3,1.2) [label=above:$B_2$] {};
 \vertex (b1) at (3,0) [label=above:$$] {};
 \vertex (c1) at (4,0) [label=above:$$] {};
 \vertex (b2) at (3,1) [label=above:$$] {};
 \vertex (c2) at (4,1) [label=above:$$] {};
 \node (b) at (4,1.2) [label=above:$B'_2$] {};
 \path 
 (b2) edge (c2)
 (c2) edge (c1)
 (b1) edge (b2);
 \draw[color=blue!80, dashed] (3,0.5) ellipse (0.3 and 0.75);
 \draw[color=blue!80, dashed] (4,0.5) ellipse (0.3 and 0.75);
 \end{tikzpicture}\]

\caption{\label{fig:pi} The $\pi$-cylinders (see \cite{dama}).}	
\end{figure}

}\end{exm}
%%%%%%%%%%%%%%%%%%%%%%%%%%%%%%%%%%%%%%%%%%%%%%%%%%%%%%%%%%%%%%%%%%%%%%%%%%%%%%%%%%%%%
%%%%%%%%%%%%%%%%%%%%%%%%%%%%%%%%%%%%%%%%%%%%%%%%%%%%%%%%%%%%%%%%%%%%%%%%%%%%%%%%%%%%%
%%%%%%%%%%%%%%%%%%%%%%%%%%%%%%%%%%%%%%%%%%%%%%%%%%%%%%%%%%%%%%%%%%%%%%%%%%%%%%%%%%%%%
%%%%%%%%%%%%%%%%%%%%%%%%%%%%%%%%%%%%%%%%%%%%%%%%%%%%%%%%%%%%%%%%%%%%%%%%%%%%%%%%%%%%%
\begin{pro}\label{pro:bs}
Given a bsymmetric cylinder $\mathrm{H}=(\mathrm{B},\mathrm{C})$, the following two statements hold and are equivalent.
\begin{itemize}
	\item[{\rm a)}] For every integer $k \geq 0$ we have 
	$$\left[
	\begin{array}{c}
	E^{^{{bc}}}\\ 
	E^{^{{b'c}}}
	\end{array}\right]
	C^k
	\left[
	\begin{array}{cc}
	(E^{^{{bc}}})^*& 
	(E^{^{{b'c}}})^*
	\end{array}\right]
	 =
	 \left[
	 	\begin{array}{cc}
	 	E^{^{{bc}}}C^k(E^{^{{bc}}})^*&E^{^{{bc}}}C^k(E^{^{{b'c}}})^* \\
	 	E^{^{{b'c}}}C^k(E^{^{{bc}}})^*&E^{^{{b'c}}}C^k(E^{^{{b'c}}})^*
	 	\end{array}\right] \in {\cal BS}.$$
	\item[{\rm b)}] For every $x \in \mathbb{R}$ which is not an eigenvalue of $C$ we have 
	$$\left[
	\begin{array}{c}
	E^{^{{bc}}}\\ 
	E^{^{{b'c}}}
	\end{array}\right]
	(xI-C)^{-1}
	\left[
	\begin{array}{cc}
	(E^{^{{bc}}})^*& 
	(E^{^{{b'c}}})^*
	\end{array}\right] \isdef
	\left[
	\begin{array}{cc}
	R_{_d}(C)&R_{_a}(C) \\
	R_{_a}(C)&R_{_d}(C)
	\end{array}\right]
	 \in {\cal BS},$$
where $R_{_{d}}(C_{_{i}})$ and $R_{_{a}}(C_{_{i}})$ are rational functions of $x$.	 
\end{itemize}	
\end{pro}
\begin{proof}{
		For Part~$(a)$, fix $k \geq 0$ and first note that using the setup preceding Example~\ref{exm:bsc}
		we have,
		$$\begin{array}{ccc}
E^{^{{b'c}}}C^k(E^{^{{b'c}}})^*&=&	E^{^{{bc}}}PC^kP^*(E^{^{{bc}}})^*\\
			&=&E^{^{{bc}}}C^k(E^{^{{bc}}})^*.
				\end{array}$$
On the other hand, 	
				$$\begin{array}{ccc}
E^{^{{bc}}}C^k(E^{^{{b'c}}})^*&=&	E^{^{{bc}}}C^kP^*(E^{^{{bc}}})^*\\
&=&E^{^{{bc}}}P^*C^k(E^{^{{bc}}})^*\\
			&=&E^{^{{b'c}}}C^k(E^{^{{bc}}})^*.
							\end{array}$$
For Part~$(b)$ note that the resolvent $(xI-C)^{-1}$ can be expressed in terms of powers of $(xI-C)$ as a polynomial by Cayley-Hamilton theorem, which itself can be expressed in terms of powers of $C$. Hence, Part~$(b)$ holds as a consequence of Part~$(a)$.
		
		Moreover, one may note that 
			$$Y(X) \isdef \left[
			\begin{array}{c}
			E^{^{{bc}}}\\ 
			E^{^{{b'c}}}
			\end{array}\right]
			X^k
			\left[
			\begin{array}{cc}
			(E^{^{{bc}}})^*& 
			(E^{^{{b'c}}})^*
			\end{array}\right]
		     $$
		is a continuous linear operator, hence, using 	Dunford-Taylor integral representation (e.g. \cite{rudin}) for the $k$th power operator one gets $(a)$.
		
}\end{proof}

It is important to note that for any bsymmetric cylinder $\mathrm{H}=(\mathrm{B},\mathrm{C})$, by Proposition~\ref{pro:bs}, essentially 
$$\left[
\begin{array}{c}
E^{^{{bc}}}\\ 
E^{^{{b'c}}}
\end{array}\right]
f(C)
\left[
\begin{array}{cc}
(E^{^{{bc}}})^*& 
(E^{^{{b'c}}})^*
\end{array}\right]
 \in {\cal BS}$$
for any holomorphic function $f$.

Recall that given $t \geq 1$, a {\it coherent} list of cylinders 
${\cal H}=(\mathrm{H}_{_{0}},\cdots,\mathrm{H}_{_{t-1}})$ 
is an ordered list of $t$ (not necessarily disjoint) cylinders $\mathrm{H}_{_{i}}=(\mathrm{B},\mathrm{C}_{_{i}})$
for $0 \leq i < t$, meaning that all cylinders have isomorphic bases (see \cite{dama,madani} for more details).

Now, given a graph $\mathrm{G}$ which is  
decomposed into $k$ edge-disjoint spanning subgraphs ${\cal G}=(\mathrm{G}_{_{0}},\cdots,\mathrm{G}_{_{t-1}})$ each of which has no isolated vertex,
along with a coherent set of cylinders 
${\cal H}=(\mathrm{H}_{_{0}},\cdots,\mathrm{H}_{_{t-1}})$, one may talk about the cylindrical construction  ${\cal G}\boxtimes{\cal H}$, which is the graph constructed 
by replacing each edge of $\mathrm{G}_{_{i}}$ by the corresponding cylinder $\mathrm{H}_{_{i}}$
for all $0 \leq i < t$ and then identifying the bases of cylinders corresponding to  each vertex of $\mathrm{G}$. Then it is not difficult to verify that the adjacency 
matrix of ${\cal G}\boxtimes{\cal H}$, denoted by $A({{\cal G}\boxtimes{\cal H}})$, can be organized as the following block matrix,
$$A({{\cal G}\boxtimes{\cal H}})= \left[
		\begin{array}{c|ccc}
		I\otimes B+\displaystyle{\sum_{i=0}^{t-1}} (G_{_i}\otimes E^{^{bb'}}_{_i})&M_{_0}&\cdots&M_{_{t-1}}\\ \hline
		{(M_{_0})}^*&I\otimes C_{_0}&&0\\ 
		\vdots &&\ddots&\\ 
		{(M_{_{t-1}})}^* &0&&I\otimes C_{_{t-1}}
		\end{array}\right].$$
%\begin{exm}{\label{exm:petersen}{\bf The Petersen graph}\\
%}\end{exm}
\subsection{The main spectral theorem}

\begin{defin}{
For $t \geq 1$, a {\it commutative $t$-decomposition} of a graph $\mathrm{G}$ is a family of $t$ edge-disjoint spanning subgraphs of $\mathrm{G}$,
as $(\mathrm{G}_{_{0}},\cdots,\mathrm{G}_{_{t-1}})$ each of which has no isolated vertex such that 
for every pair  of disjoint indices  $i$ and $j$, the matrices  $G_{_{i}}$ and $G_{_{j}}$ commute, and moreover,
$G=\displaystyle{\sum_{i=0}^{t-1}} G_{_{i}}$.

For any such commutative $t$-decomposition, a numbering of the spectrum of each component 
$\mathrm{G}_{_{i}}$ as $\theta_i^{^j}$ for $0 \leq i < t$ and $1 \leq j \leq n$, is said to be a {\it compatible} numbering of eigenvalues if there exists a unitary matrix $U$ such that the $(j,j)$ entry of the diagonal matrix $UG_{_{i}}U^{-1}$
is $\theta_i^{^j}$.
}\end{defin}

Given an integer $n \geq 3$ and $t$ mutually distinct positive integers 
$k_{_0},\cdots,k_{_{t-1}}$ with $1 \leq k_{_{i}} < n/2$ for $0 \leq i < t$, let $\mathrm{C}_{_{n}}(k_{_0},\cdots,k_{_{t-1}})$ be the Cayley graph defined on the vertex set $\{0,1,\cdots,n-1\}$ and with the edge set 
$\cup_{_{i=1}}^{^{t}}E_{_{i}}$ where 
$$E_{_{i}} \isdef  \{uv \ | \ v=u+k_{_i}\ ({\rm mod}\ n)\}.$$
 In this setting, $\mathrm{C}_{_{n}}(k_{_{i}})$ stands for the subgraph on the edge set $E_{_{i}}$.

Now, as a very basic and important example of a compatible numbering of eigenvalues, let $U_{_F}$ be the Fourier $n\times n$ matrix whose $(r,s)$ entry is equal to ${1\over \sqrt{n}}\zeta_{_n}^{^{r\times s}}$ where $\zeta_{_n}$ is the $n$th root of unity, $\exp(\frac{2\pi {\sf i}}{n})$. Then note that, 
$U_{_F}\mathrm{C}_{_{n}}(k)U_{_F}^{^{-1}}$ is a diagonal matrix with $(r,r)$ entry equal to $2\cos(2\pi rk/n)$. Hence, $\theta^{^{j}}_{_{i}} \isdef 2\cos(2\pi jk_{_{i}}/n)$ is a compatible numbering for the eigenvalues of the decomposition $$(\mathrm{C}_{_{n}}(k_{_{0}}),\mathrm{C}_{_{n}}(k_{_{1}}),\cdots,\mathrm{C}_{_{n}}
(k_{_{t-1}})).$$ 
%It is instructive to add that the Fourier matrix can be used to find compatible numberings of eigenvalues for any given commutative set of cycles.????????????????????

The following theorem links the fundamental subjects, {\it graph decompositions} and 
{\it graph spectra} in a general setup, making it possible to use results on graph decompositions to construct graphs with prescribed properties for their spectra on the one hand, and motivate interesting decomposition problems to be investigated in forthcoming research, on the other (also see  Section~\ref{sec:concluding}).

\begin{thm}\label{thm:main}
	Given $t \geq 1$, let  ${\cal G}=(\mathrm{G}_{_{0}},\cdots, \mathrm{G}_{_{t-1}})$  be a commutative $t$-decomposition of  $\mathrm{G}$ on $n$ vertices  with $|E(\mathrm{G}_{_{i}})|=m_{_{i}}$ and let 
	$\theta_i^{^j}$ be a compatible numbering of its eigenvalues.
	Also, assume that ${\cal H}=(\mathrm{H}_{_{0}},\cdots,\mathrm{H}_{_{t-1}})$ 
	is an ordered list of coherent bsymmetric cylinders with base  $\mathrm{B}$. Then
	{\footnotesize $$\phi({\cal G}\boxtimes{\cal H},x)=\left(\prod_{i=0}^{t-1}  \phi({C}_{_{i}},x)^{m_i}	
		\right)\det\left(I_{_{n}}\otimes (xI-B)-\sum_{i=0}^{t-1} \left(G_{_{i}}\otimes (E^{^{bb'}}_{_i}+R_{_{a}}(C_{_{i}}))+D_i\otimes R_{_{d}}(C_{_{i}})\right)\right)$$}
	where $D_{_i}$ is the diagonal matrix of degrees of $\mathrm{G}_{_{i}}$. 	
	In particular,
	\begin{itemize}
		\item[\rm a)]{If each ${\mathrm{G}}_{_i}$ is  a $d_{_{i}}$-regular graph, then
		{\footnotesize	$$\phi({\cal G}{\boxtimes}{\cal H},x)=\left(\prod_{i=0}^{t-1}   			
			\phi({C}_{_{i}},x)^{m_i}\right)
			\prod_{j=1}^n \phi\left(B+\sum_{i=0}^{t-1} (d_{_i}R_{_{d}}(C_{_{i}})+\theta_i^{^j}(E^{^{bb'}}_{_i}+R_{_{a}}(C_{_{i}})),x \right).$$}
	         } 
		\item[\rm b)]{If each ${\mathrm{H}}_{_i}$ has no inner-vertex, then
				{\footnotesize	$$\phi({\cal G}{\boxtimes}{\cal H},x)=\prod_{j=1}^n\phi \left(B+\sum_{i=0}^{t-1}\theta_i^{^j}E^{^{bb'}}_{_i},x\right).$$}
		
		Specifically, if also the base $\mathrm{B}$ of each ${\mathrm{H}}_{_i}$ 
		is an empty  graph, then we have the following generalized tensor product,
		{\footnotesize	$$\phi({\cal G}{\boxtimes}{\cal H},x)=\prod_{j=1}^n\phi\left(\sum_{i=0}^{t-1}\theta_i^{^j}E^{^{bb'}}_{_i},x\right).$$}
		}	 	 		 
	\end{itemize}
\end{thm}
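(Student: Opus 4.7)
The strategy is to partition the matrix $xI - A({\cal G}\boxtimes{\cal H})$ into the $2\times 2$ block form suggested by the displayed adjacency matrix, with outer block
\[
A_{_{11}} \isdef xI - I_{_n}\otimes B - \sum_{i=0}^{t-1}G_{_i}\otimes E_{_i}^{^{bb'}}
\]
indexed by the $n(\ell+1)$ base-vertex copies, and block-diagonal inner block $A_{_{22}}=\bigoplus_{i=0}^{t-1} I_{_{m_i}}\otimes (xI-C_{_i})$ indexed by the inner vertices of each cylinder type. Applying the block-determinant identity recalled at the end of the introduction gives
\[
\phi({\cal G}\boxtimes{\cal H},x) = \det A_{_{22}} \cdot \det\Bigl(A_{_{11}} - \sum_{i=0}^{t-1} M_{_i}(I\otimes(xI-C_{_i}))^{-1}M_{_i}^{^*}\Bigr),
\]
and the block-diagonal first factor immediately produces $\det A_{_{22}}=\prod_i\phi(C_{_i},x)^{m_i}$, matching the leading product in the claimed formula.

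The principal computation is then the identification of the Schur correction $\sum_i M_{_i}(I\otimes(xI-C_{_i}))^{-1}M_{_i}^{^*}$. Arbitrarily orienting each edge $uv\in E({\mathrm G}_{_i})$, the copy of ${\mathrm H}_{_i}$ placed on $uv$ contributes $E_{_i}^{^{bc}}(xI-C_{_i})^{-1}(E_{_i}^{^{bc}})^{^*}$ to the diagonal $(u,u)$-block, $E_{_i}^{^{b'c}}(xI-C_{_i})^{-1}(E_{_i}^{^{b'c}})^{^*}$ to the diagonal $(v,v)$-block, and $E_{_i}^{^{bc}}(xI-C_{_i})^{-1}(E_{_i}^{^{b'c}})^{^*}$ (respectively its transpose) to the $(u,v)$- and $(v,u)$-off-diagonal blocks. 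By Proposition~\ref{pro:bs}(b) the first two equal $R_{_d}(C_{_i})$ and the last two equal $R_{_a}(C_{_i})$, rendering the orientation choice irrelevant. Summing the incidence contributions over all edges of each ${\mathrm G}_{_i}$ then gives
\[
\sum_{i=0}^{t-1} M_{_i}(I\otimes(xI-C_{_i}))^{-1}M_{_i}^{^*} = \sum_{i=0}^{t-1}\bigl(D_{_i}\otimes R_{_d}(C_{_i}) + G_{_i}\otimes R_{_a}(C_{_i})\bigr),
\]
and subtracting from $A_{_{11}}$ produces precisely the argument of the outer determinant in the theorem.

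For part~(a), regularity collapses each $D_{_i}$ to $d_{_i}I_{_n}$, so every operator acting on the first tensor-factor inside the outer determinant lies in the commutative algebra generated by $G_{_0},\ldots,G_{_{t-1}}$. The compatible-numbering hypothesis supplies a single unitary $U$ that simultaneously diagonalises the $G_{_i}$ with $(j,j)$ entries $\theta_{_i}^{^j}$, so conjugation by $U\otimes I$ makes the outer determinant block-diagonal with $n$ diagonal blocks whose $j$-th one is $xI-B-\sum_i\bigl(d_{_i}R_{_d}(C_{_i})+\theta_{_i}^{^j}(E_{_i}^{^{bb'}}+R_{_a}(C_{_i}))\bigr)$, and multiplying their determinants yields (a). Part~(b) is a further degeneration: the absence of inner vertices kills $A_{_{22}}$ together with every $R_{_a}$, $R_{_d}$, and $D_{_i}$ term, so the outer determinant reduces to $\det(xI - I\otimes B-\sum_i G_{_i}\otimes E_{_i}^{^{bb'}})$, whose factorisation through the same simultaneous diagonalisation of the commuting $G_{_i}$ produces the asserted product without any regularity hypothesis, and the $B=0$ specialisation is immediate. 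The main obstacle throughout is the bookkeeping of the second paragraph, where one must carefully verify that the apparent orientation dependence of $M_{_i}(I\otimes(xI-C_{_i}))^{-1}M_{_i}^{^*}$ is canonically resolved by Proposition~\ref{pro:bs} into an expression through $D_{_i}$ and $G_{_i}$ alone; everything else is a routine combination of the Schur complement with simultaneous diagonalisation of a commutative family of symmetric matrices.
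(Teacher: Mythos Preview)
Your proposal is correct and follows essentially the same route as the paper: the block Schur complement of $xI-A({\cal G}\boxtimes{\cal H})$ with respect to the inner-vertex block, an edge-by-edge identification of the correction $\sum_i M_{_i}(I\otimes(xI-C_{_i}))^{-1}M_{_i}^{^*}$ via Proposition~\ref{pro:bs}(b), and simultaneous diagonalisation of the commuting $G_{_i}$'s for the special cases. The only point the paper makes explicit that you left implicit is that the Schur-complement identity is first derived for $x$ outside the spectra of the $C_{_i}$ and then extended to all $x$ by the fact that two polynomials agreeing outside a finite set are identical; you may wish to add a sentence to that effect.
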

\begin{proof}{Clearly, we have 
		$${{\cal G}{\boxtimes}{\cal H}} \isdef \left[
		\begin{array}{c|ccc}
		I_{_{n}}\otimes B+\displaystyle\sum_{i=0}^{t-1} (G_{_i}\otimes E^{^{bb'}}_{_i})&M_{_0}&\cdots&M_{_{t-1}}\\ \hline
		{(M_{_0})}^*&I_{_{m_{_{1}}}}\otimes C_{_0}&&0\\ 
		\vdots &&\ddots&\\ 
		{(M_{_{t-1}})}^* &0&&I_{_{m_{_{k}}}}\otimes C_{_{t-1}}
		\end{array}\right].$$

        In what follows, we prove identities for the characteristic polynomial of 
        ${\cal G}\boxtimes{\cal H}$ when the variable $x$ is a real number outside of the spectra of $C_{_{i}}$'s. Note that this is sufficient to show that the characteristic identities actually hold for all real numbers $x \in \mathbb{R}$
        since two polynomials of variable $x$ are identical if they are equal for all but a finite number of values for $x$.
        
         Using determinant rule for block matrices, we have
		{\footnotesize	$$
		\phi({\cal G}\boxtimes{\cal H},x)=(\prod_{i=0}^{t-1}\det(xI-C_{_{i}})^{m_i})\det(I\otimes (xI-B)-\sum_{i=0}^{t-1}(G_{_{i}}\otimes E^{^{bb'}}_{_i}+M_{_{i}}(I_{m_i}\otimes(xI-C_{_{i}}))^{-1}M_{_{i}}^*)),$$}
		and consequently,
		{\footnotesize	$$\phi({\cal G}\boxtimes{\cal H},x)=(\prod_{i=0}^{t-1}\det(xI-C_{_{i}})^{m_i})\det(I\otimes (xI-B)-\sum_{i=0}^{t-1}(G_{_{i}}\otimes E^{^{bb'}}_{_i}+\sum_{j=1}^{m_{_i}}(M_{_{ij}}(xI-C_{_{i}})^{-1}M_{_{ij}}^*))),$$}
		where $M_{_{ij}}$ is the column of $M_{_{i}}$ corresponding to the $j$th edge of $\mathrm{G}_{_{i}}$,
		with only two non-zero blocks equal to $E_{_{i}}^{^{bc}}, E_{_{i}}^{^{b'c}}$ corresponding to the two ends of the $j$th edge.
		 
	Note that, $M_{_{ij}}(xI-C_{_{i}})^{-1}M_{_{ij}}^*$ is an $n\times n$ block matrix with four non-zero block, two of which are two diagonal blocks
		 corresponding to the two ends of the $j$th edge
		 equal to $R_{_{d}}(C_{_{i}}))$ and two  other
		 blocks corresponding to the two entries of the $j$th  edge  in the adjacency matrix of $G_{_{i}}$
		  equal to $R_{_{a}}(C_{_{i}}))$.
Now, using Proposition~\ref{pro:bs} we have
		{\footnotesize	$$\phi({\cal G}\boxtimes{\cal H},x)=(\prod_{i=0}^{t-1}\det(xI-C_{_{i}})^{m_i})\det(I\otimes (xI-B)-\sum_{i=0}^{t-1}(G_{_{i}}\otimes (E^{^{bb'}}_{_i}+R_{_{a}}(C_{_{i}}))+D_i\otimes R_{_{d}}(C_{_{i}}))).$$}
		 
		Since $G_{_{i}}$'s are  symmetric matrices that commute with each other, 
		they are simultaneously diagonalizable by a unitary matrix $U$. Let  $\theta_{_i}^{^1},\cdots, \theta_{_i}^{^n}$
		be the corresponding compatible numbering of eigenvalues of 	$G_{_{i}}$'s, i.e. for every  $0 \leq i < t$ we have 
		$$UG_{_{i}}U^{-1}=\left( \begin{array}{cccc}
		\theta_{_i}^{^1} & 0&\cdots&0 \\
		0& \theta_{_i}^{^2}&\cdots&0 \\
		\vdots& &\ddots&\vdots \\
		0&0 &\cdots&\theta_{_i}^{^n}
		\end{array} \right).
		$$
		Now, if each $\mathrm{G}_{_{i}}$ is a $d_i$-regular graph, then 
        {\footnotesize		$$\phi({\cal G}\boxtimes{\cal H},x)=(\prod_{i=0}^{t-1}\det(xI-C_{_{i}})^{m_i})\det(I\otimes (xI-B-\sum_{i=0}^{t-1}d_{_i}R_{_{d}}(C_{_{i}}))-\sum_{i=0}^{t-1}(UG_{_{i}}U^{-1}\otimes (E^{^{bb'}}_{_i}+R_{_{a}}(C_{_{i}})))$$
		
		$$=(\prod_{i=0}^{t-1}\det(xI-C_{_{i}})^{m_i})
		\det(I\otimes (xI-B-\sum_{i=0}^{t-1}d_{_i}R_{_{d}}(C_{_{i}}))$$
%%%%%%%%%%%%%%%%%%%%%%%%%%%%%%%%%%%%%%%%%%%%%%%%%%%%%%%%%%%%%%%%%%%%%%%%%%%%%%%%%%%%%%%%
	$$-\sum_{i=0}^{t-1}\left[\begin{array}{cccc}
		\theta_i^1(E^{^{bb'}}_{_i}+R_{_{a}}(C_{_{i}})) & 0&\cdots&0 \\
		0& \theta_i^2(E^{^{bb'}}_{_i}+R_{_{a}}(C_{_{i}}))&\cdots&0 \\
		\vdots& &\ddots&\vdots \\
		0&0 &\cdots&\theta_i^n (E^{^{bb'}}_{_i}+R_{_{a}}(C_{_{i}}))
		\end{array} \right])
		$$
		$$=(\prod_{i=0}^{t-1}\det(xI-C_{_{i}})^{m_i}) \times$$}
		{\tiny $$\det(\left[\begin{array}{ccc}
		xI-B-\displaystyle\sum_{i=0}^{t-1}(d_{_i}R_{_{d}}(C_{_{i}})+\theta_i^1(E^{^{bb'}}_{_i}+R_{_{a}}(C_{_{i}}))) &\cdots&0 \\
		\vdots &\ddots&\vdots \\
		0 &\cdots&xI-B-\displaystyle\sum_{i=0}^{t-1}(d_{_i}R_{_{d}}(C_{_{i}})+\theta_i^n (E^{^{bb'}}_{_i}+R_{_{a}}(C_{_{i}})))
		\end{array} \right])
		$$}
		Hence,
		 {\footnotesize $$\phi({\cal G}\boxtimes{\cal H},x)=(\prod_{i=0}^{t-1}\det(xI-C_{_{i}})^{m_i})\prod_{j=1}^n\det(xI-B-\displaystyle\sum_{i=0}^{t-1}(d_{_i}R_{_{d}}(C_{_{i}})+\theta_i^j(E^{^{bb'}}_{_i}+R_{_{a}}(C_{_{i}})))).$$
	    }
	    
	    	    Part $(b)$ follows directly from the main equality above.
	}\end{proof}

It is instructive to go through a couple of remarks on some special cases (see \cite{dama,madani} for the necessary background).

First, note that the identity cylinder  is clearly bsymmetric and the corresponding 
cylindrical construction is identical to the tensor product of matrices. Hence, Part~$(b)$'s reference to tensor products is justified.

On the other hand, it is known that a large number of graph products are essentially 
symmetric cylindrical constructs. In particular, one may refer to the NEPS or the lexicographic products of graphs. Hence, Theorem~\ref{thm:main} rediscovers a number of 
previously known results for such constructs.

It is interesting to recall that the Petersen graph is actually a cylindrical construct 
on the complete graph on five vertices using the coherent set of cylinders $\{\sqcap,\sqcup\}$ (see \cite{dama,madani}). But noting that $\mathrm{K}_{_{5}}$ can be decomposed into two commuting five-cycles, one may apply Theorem~\ref{thm:main} to compute the spectrum. This scenario is worked out in detail for the more general case of generalized GI-graphs in Example~\ref{exm:igraph}. 

\begin{exm}{\label{exm:subdiv}{\bf Spectrum of a subdivision} \cite{mnu}\\
	In this example we rediscover a result of V.~B.~Menuhin \cite{mnu} on the spectrum of the
	$k$th
	subdivision of a graph $\mathrm{G}$ which is clearly isomorphic to
	$\mathrm{G} \boxtimes \mathrm{P}_{_{k+2}}$. Note that in this case, the
	cylinder $\mathrm{P}_{_{k+2}}$
	is clearly bsymmetric (see Figure~\ref{fig:path}) and
	we have $\mathrm{B}=\mathrm{K}_{_{1}}$ and $\mathrm{C}=\mathrm{P}_{_{k}}$.
	Hence, one may verify that
	$$R_{_{d}}(P_{_{k}})=\frac{U_{_{k-1}}(x/2)}{U_{_{k}}(x/2)}, \ \   R_{_{a}}(P_{_{k}})=\frac{1}{U_{_{k}}(x/2)}
	\ \ {\rm and} \ \  E^{^{bb'}}=0,$$
	in which $U_{_{k}}(x)$ is the Chebyshev polynomial of the second kind.
	Therefore, using Theorem~\ref{thm:main} we have,
	{ $$	\begin{array}{ccl}
\phi(\mathrm{G} \boxtimes \mathrm{P}_{_{k+2}},x)&=&
		\phi(\mathrm{P}_{_{k}},x)^{|E(\mathrm{G})|}\
		\phi\left( G\otimes R_{_{a}}(P_{_{k}})+D\otimes R_{_{d}}(P_{_{k}}),x\right)\\
		&&\\
		&=&\left(U_{_{k}}(x/2)\right)^{|E(\mathrm{G})|}
		\phi\left( R_{_{a}}(P_{_{k}})G+R_{_{d}}(P_{_{k}}) D,x\right)\\
				&&\\
             &=&\left(U_{_{k}}(x/2)\right)^{|E(\mathrm{G})|-n}
		\phi\left( G+U_{_{k-1}}(x/2)D,xU_{_{k}}(x/2)\right).  
				\end{array}$$}
	In particular, if $\mathrm{G}$ is a $d$-regular graph, then
		{ $$	\begin{array}{ccl}
\phi(\mathrm{G} \boxtimes \mathrm{P}_{_{k+2}},x)&=&
		\left( U_{_{k}}(x/2)\right)^{|E(\mathrm{G})|}
		\phi\left(G,x-d\ \frac{U_{_{k-1}}(x/2)}{U_{_{k}}(x/2)}\right)\\
					&&\\
	&=&\left( U_{_{k}}(x/2)\right)^{|E(\mathrm{G})|-n}
		\phi\left(G,x{U_{_{k}}(x/2)}-d{U_{_{k-1}}(x/2)}\right). 				\end{array}$$}
}\end{exm}

\begin{figure}[ht]

\[\begin{tikzpicture}[x=1.5cm, y=1.5cm]
\vertex (b1) at (1.5,1) [label=below:$b_2$] {};
\vertex (b2) at (1.5,2) [label=above:$b_0$] {};
\vertex (b3) at (1,1.5) [label=below:$b_1$] {};
\vertex (b4) at (2,1.5) [label=below:$b_3$] {};
\vertex (b'1) at (3,1) [label=below:$b'_2$] {};
\vertex (b'2) at (3,2) [label=above:$b'_0$] {};
\vertex (b'3) at (2.5,1.5) [label=below:$b'_1$] {};
\vertex (b'4) at (3.5,1.5) [label=below:$b'_3$] {};

\path
(b1) edge (b2)
(b1) edge (b3)
(b1) edge (b4)
(b2) edge (b3)
(b2) edge (b4)
(b4) edge (b3)
(b'1) edge (b'2)
(b'1) edge (b'3)
(b'1) edge (b'4)
(b'2) edge (b'3)
(b'2) edge (b'4)
(b'4) edge (b'3)
(b'2) edge[bend right=20] (b2);

\end{tikzpicture}\]

\caption{\label{fig:pii}The cylinder $\sqcap_{_{4,0}}$.}	
\end{figure}

\begin{exm}{\label{exm:igraph}{\bf Spectra of GI-graphs \cite{conder}}\\
	
	Let $\sqcap_{_{t,i}}$ be the bsymmetric cylinder whose bases are isomorphic to 
	$\mathrm{K}_{_t}$ on the vertex set $\{b_{_{0}},b_{_{1}},\cdots,b_{_{t-1}}\}$ in which there is only one edge joining the $i$th vertices $b_{_{i}}$'s of the two bases (see Figure~\ref{fig:pii}). In this setting note that 
	${\cal H} \isdef \{\sqcap_{_{t,0}},\cdots,\sqcap_{_{t,t-1}}\}$ form a coherent set of bsymmetric cylinders.
	
	Now, given a positive integer $n > 0$ and $t$ positive and distinct integers $k_{_0},\cdots,k_{_{t-1}}$ each less than $n/2$, the {\it generalized I-graph}  $\mathrm{GI}(n;k_{_0},\cdots,k_{_{t-1}})$ is
	the cylindrical construct 
    $$\mathrm{GI}(n;k_{_0},\cdots,k_{_{t-1}}) \isdef 
    \mathrm{C}_{_{n}}(k_{_0},\cdots,k_{_{t-1}}) \boxtimes {\cal H}, $$
	in which the cylinder
	$\sqcap_{_{t,i}}$ is assigned to  the edge set $E_{_{i}}$. 
	
	First, note that, it is not hard to verify that this definition is equivalent to the original definition introduced in \cite{conder}. On the other hand, it must be mentioned that the special case for which $t$ is equal to $2$ gives rise to the definition of 
	I-graphs \cite{igraph} and when $t=k_{_0}=2$ we have the well-known generalized Petersen graphs \cite{Gera,gepe}.
	
	In what follows, we rediscover the characteristic polynomial and the spectra of generalized I-graphs using Theorem~\ref{thm:main}.
	
	One may verify that the subgraphs $\mathrm{C}_{_{n}}(k_{_i})$ for $0 \leq i < t$
	constitute a commutative $t$-decomposition of $\mathrm{C}_{_{n}}(k_{_0},\cdots,k_{_{t-1}})$, and consequently, Theorem~\ref{thm:main}
	can be applied. Noting that 
	$\theta_{_i}^{^j}= 2\cos(2jk_{_i}\pi/n))$, $B=J-I$, and moreover $E^{^{bb'}}_{_{i}}$ 
	is a matrix with all entries equal to zero except the $(i,i)$ entry which is equal to one, we have
		{$$\phi(\mathrm{GI}(n;k_{_0},\cdots,k_{_{t-1}}),x)=\prod_{_{j=1}}^{^n}\phi (J-I+\sum_{_{i=0}}^{^{t-1}}2\cos(2jk_{_i}\pi/n)E^{^{bb'}}_{_{i}}).$$}
	Also, focusing on the special case of I-graphs one gets,
		{\footnotesize $$\phi(\mathrm{I}(n;k,\ell),x)=
		\prod_{j=1}^n (x^2-(2\cos(2jk\pi/n)+2\cos(2j\ell\pi/n))x+(4\cos(2jk\pi/n)\cos(2j\ell\pi/n))-1).$$}
and moreover for $j=1,\ldots,n$ the eigenvalues are as follows,
$$(\cos(2jk\pi/n)+\cos(2j\ell\pi/n))\pm \sqrt{(\cos(2jk\pi/n)-\cos(2j\ell\pi/n))^2+1}.$$

}\end{exm}
	
Using notations of Example~\ref{exm:igraph},
 $\sqcap_{_{2,0}}$ and $\sqcap_{_{2,1}}$
 are referred to as $\pi$-cylinders. The most basic application of $\pi$-cylinders 
is their use to construct the celebrated Petersen graph (see Example~\ref{exm:igraph}
and \cite{dama,madani} for more on this).

\begin{exm}{\label{exm:myexample}{\bf On the role of internal vertices}\\
	
	In this example we explore the role of inner-vertices on the spectrum of the cylindrical construction. As a very simple example consider the bsymmetric cylinder 
	 $\mathrm{H}$ depicted in Figure~\ref{fig:mycylinder}, for which one may easily verify that 
     $B=\bar{I}$ and $R_{_d}(C)=R_{_a}(C)={1\over x}J$.

Now if $\mathrm{G}$ is  a $d$-regular graph on $n$ vertices, then using Theorem~\ref{thm:main} we have,
$$\phi({\cal G}\boxtimes \mathrm{H},x)=x^{^{nd/2}}
\prod_{j=1}^n\phi(\bar{I}+{d\over x}J+\theta^{^j}(I+{1\over x}J),x )$$
$$=x^{^{n(d-2)/2}}
\prod_{j=1}^n\phi(x\bar{I}+dJ+\theta^{^j}(xI+J),x^{^2} )$$
$$=x^{^{n(d-2)/2}}
\prod_{j=1}^n\det\left(\left[
\begin{array}{cc}
x^2-d-\theta^{^j}(x+1)& -x-d-\theta^{^j}\\
-x-d-\theta^{^j}&x^2-d-\theta^{^j}(x+1)
\end{array}\right] \right)$$
$$=x^{^{n(d-2)/2}} \prod_{j=1}^n\ 
x(x-(\theta^{^j}-1)
(x^{^2}-(\theta^{^j}+1)x-2(\theta^{^j}+d))).$$
Therefore, ${\cal G}\boxtimes \mathrm{H}$ has zero eigenvalues coming from the 
internal subgraph of the cylinder along with $2n$ other eigenvalues as follows,
$$(0)^{nd/2},\left\{(\theta^{^j}-1),
\left(\frac{(\theta^{^j}+1)\pm\sqrt{(\theta^{^j}+1)^2+8(\theta^{^j}+d)}}{2}\right)\right\}_{1 \leq j \leq n}.$$
This also can be true for the general case using Theorem~\ref{thm:main} when one notes that eigenvalues of the  internal subgraph of the cylinder passes to the spectrum of the 
cylindrical construct when the number of edges of the base-graph $\mathrm{G}$ is larger than the number of its vertices.
}\end{exm}

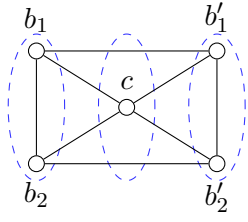
\begin{figure}[t]

\[\begin{tikzpicture}[x=1.5cm, y=1.5cm]
\vertex (b1) at (1,1) [label=below:$b_2$] {};
\vertex (b2) at (1,2) [label=above:$b_1$] {};
\vertex (c) at (1.8,1.5) [label=above:$c$] {};
\vertex (b'1) at (2.6,1) [label=below:$b'_2$] {};
\vertex (b'2) at (2.6,2) [label=above:$b'_1$] {};
\draw[color=blue!80, dashed] (1,1.5) ellipse (0.25 and 0.65);
\draw[color=blue!80, dashed] (2.6,1.5) ellipse (0.25 and 0.65);
\draw[color=blue!80, dashed] (1.8,1.5) ellipse (0.25 and 0.65);

\path
(b1) edge (b2)
(b1) edge (b'1)
(b'2) edge (b2)
(b'1) edge (b'2)
(c) edge (b'2)
(c) edge (b2)
(b1) edge (c)
(b'1) edge (c);

\end{tikzpicture}\]

\caption{\label{fig:mycylinder}See Example~\ref{exm:myexample}.}	
\end{figure}

%%%%%%%%%%%%%%%%%%%%%%%%%%%%%%%%%%%%%%%%%%%%%%%%%%%%%%%%%%%%%%%%%%%%%%%%%%%%%%%%%%%%%%%%%%%%%%%%%%%
%%%%%%%%%%%%%%%%%%%%%%%%%%%%%%%%%%%%%%%%%%%%%%%%%%%%%%%%%%%%%%%%%%%%%%%%%%%%%%%%%%%%%%%%%%%%%%%%%%%
\section{Sparsification by tree-cylinders}\label{sec:sparsifier}

Strictly speaking, for cylinders with no inner-vertices, Theorem~\ref{thm:main}
reduces the problem of computing the eigenvalues of the cylindrical construct to computing the spectrum of a perturbation of the base of the cylinders.

On the other hand, as far as finding extremely connected graphs (e.g. say Ramanujan graphs) are concerned, one should concentrate on cylindrical constructions that are capable of reducing the degree. For instance, applications such as Example~\ref{exm:igraph} 
shows such properties (also see \cite{dama,madani}). In this scenario, using such degree-reduction 
cylindrical processes, one may start from a highly connected graph (e.g. a complete graph) and apply iterative degree reductions to find highly connected graphs of smaller degree. Clearly, the main question in this setting is to estimate the rate at which the cylindrical construction expands the spectrum.

{\it Tree cylinders} are introduced in \cite{madani} as examples of cylinders that may be used 
in such degree reduction procedures. Hence, in this section we are going to delve a bit more into the spectral properties of such constructs.

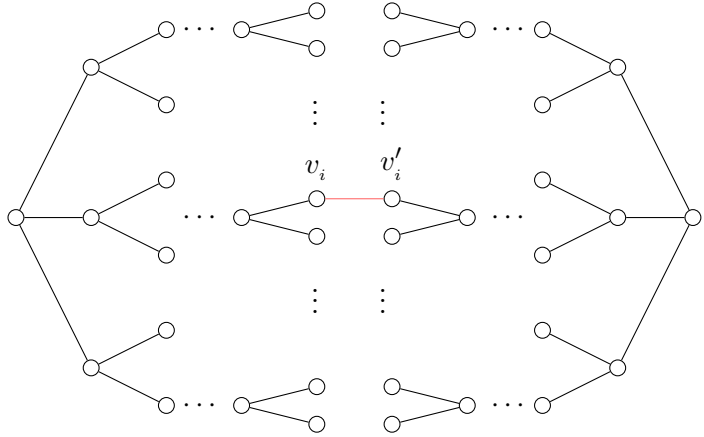
\begin{figure}[t]

 \[\begin{tikzpicture}[x=1cm, y=1cm]
 \vertex (r) at (2,3) [label=left:$$] {};
 \vertex (b1) at (3,1) [label=above:$$] {};
 \vertex (b2) at (3,3) [label=below:$$] {};
 \vertex (b3) at (3,5) [label=below:$$] {};
 \vertex (b11) at (4,0.5) [label=below:$$] {};
 \vertex (b12) at (4,1.5) [label=below:$$] {};
 \vertex (b21) at (4,2.5) [label=below:$$] {};
 \vertex (b22) at (4,3.5) [label=above:$$] {};
 \vertex (b31) at (4,4.5) [label=below:$$] {};
 \vertex (b32) at (4,5.5) [label=above:$$] {};
 \vertex (bi) at (5,0.5) [label=above:$$] {};
 \vertex (bii) at (5,3) [label=below:$$] {};
 \vertex (biii) at (5,5.5) [label=below:$$] {};
 \vertex (bi11) at (6,0.25) [label=below:$$] {};
 \vertex (bi12) at (6,0.75) [label=below:$$] {};
 \vertex (bi21) at (6,2.75) [label=below:$$] {};
 \vertex (bi22) at (6,3.25) [label=above:$v_{_i}$] {};
 \vertex (bi31) at (6,5.25) [label=below:$$] {};
 \vertex (bi32) at (6,5.75) [label=above:$$] {};

 \node (v') at (5,0.5) [label=left:$\cdots$] {};
 \node (h) at (6.3,4.5) [label=left:$\vdots$] {};
 \node (h) at (6.3,2) [label=left:$\vdots$] {};
 \node (v') at (5,3) [label=left:$\cdots$] {};
 \node (v') at (5,5.5) [label=left:$\cdots$] {};
 %%%%%%%%%%%%%%%%%%%%%%%%%%%%%%%%%%%%%%%%%%%%%%%%%%%%%%%%%%%%%%%%
 \vertex (r') at (11,3) [label=left:$$] {};
 \vertex (b'1) at (10,1) [label=above:$$] {};
 \vertex (b'2) at (10,3) [label=below:$$] {};
 \vertex (b'3) at (10,5) [label=below:$$] {};
 \vertex (b'11) at (9,0.5) [label=below:$$] {};
 \vertex (b'12) at (9,1.5) [label=below:$$] {};
 \vertex (b'21) at (9,2.5) [label=below:$$] {};
 \vertex (b'22) at (9,3.5) [label=above:$$] {};
 \vertex (b'31) at (9,4.5) [label=below:$$] {};
 \vertex (b'32) at (9,5.5) [label=above:$$] {};
 \vertex (b'i) at (8,0.5) [label=above:$$] {};
 \vertex (b'ii) at (8,3) [label=below:$$] {};
 \vertex (b'iii) at (8,5.5) [label=below:$$] {};
 \vertex (b'i11) at (7,0.25) [label=below:$$] {};
 \vertex (b'i12) at (7,0.75) [label=below:$$] {};
 \vertex (b'i21) at (7,2.75) [label=below:$$] {};
 \vertex (b'i22) at (7,3.25) [label=above:$v'_{_i}$] {};
 \vertex (b'i31) at (7,5.25) [label=below:$$] {};
 \vertex (b'i32) at (7,5.75) [label=above:$$] {};

 \node (v') at (9.1,0.5) [label=left:$\cdots$] {};
 \node (h) at (7.2,4.5) [label=left:$\vdots$] {};
 \node (h) at (7.2,2) [label=left:$\vdots$] {};
 \node (v') at (9.1,3) [label=left:$\cdots$] {};
 \node (v') at (9.1,5.5) [label=left:$\cdots$] {};

 \path 
 (r) edge (b1)
 (r) edge (b2)
 (r) edge (b3)
 (b11) edge (b1)
 (b12) edge (b1)
 (b21) edge (b2)
 (b22) edge (b2)
 (b31) edge (b3)
 (b32) edge (b3)
 (bi11) edge (bi)
 (bi12) edge (bi)
 (bi21) edge (bii)
 (bi22) edge (bii)
 (bi31) edge (biii)
 (bi32) edge (biii)
 (r') edge (b'1)
 (r') edge (b'2)
 (r') edge (b'3)
 (b'11) edge (b'1)
 (b'12) edge (b'1)
 (b'21) edge (b'2)
 (b'22) edge (b'2)
 (b'31) edge (b'3)
 (b'32) edge (b'3)
 (b'i11) edge (b'i)
 (b'i12) edge (b'i)
 (b'i21) edge (b'ii)
 (b'i22) edge (b'ii)
 (b'i31) edge (b'iii)
 (b'i32) edge (b'iii)
 (b'i22) edge[color=red!60] (bi22);
 \end{tikzpicture}\]
 
\caption{\label{fig:treecylinder1} The graph ${\mathrm{T}}^{^{\bullet}}_{_{3,h,i}}$ (See Definition~\ref{def:treecyl}).}	
\end{figure}

\begin{figure}[t]

 \[\begin{tikzpicture}[x=1cm, y=1cm]
	\vertex (r0) at (2,3) [label=left:$$] {};
	\vertex (r1) at (2,5) [label=left:$$] {};
	\vertex (b01) at (3,2.5) [label=above:$$] {};
	\vertex (b02) at (3,3.5) [label=below:$$] {};
	\vertex (b11) at (3,4.5) [label=below:$$] {};
	\vertex (b12) at (3,5.5) [label=below:$$] {};
	\vertex (bi) at (4,5.5) [label=below:$$] {};
	\vertex (bi1) at (5,5.75) [label=below:$$] {};
	\vertex (bi2) at (5,5.25) [label=below:$$] {};
	\vertex (bj) at (4,2.5) [label=below:$$] {};
	\vertex (bj1) at (5,2.75) [label=below:$$] {};
	\vertex (bj2) at (5,2.25) [label=below:$$] {};
	\vertex (b3) at (4,4) [label=below:$$] {};
	\vertex (b31) at (5,3.75) [label=below:$$] {};
	\vertex (b32) at (5,4.25) [label=below:$$] {};
	\node (c) at (5,4.1) [label=above:{\footnotesize${v_{_i}}$}] {};

\node (v) at (5.35,5) [label=left:$\vdots$] {};
\node (v) at (5.35,3.5) [label=left:$\vdots$] {};
\node (v) at (4,2.5) [label=left:$\cdots$] {};
\node (v) at (4,5.5) [label=left:$\cdots$] {};
\node (v) at (4,4) [label=left:$\cdots$] {};

%%%%%%%%%%%%%%%%%%%%%%%%%%%%%%%%%%%%%%%%%%%%%%%%%%%%%%%%%%%%%%%%
	\vertex (r'0) at (9,3) [label=left:$$] {};
	\vertex (r'1) at (9,5) [label=left:$$] {};
	\vertex (b'01) at (8,2.5) [label=above:$$] {};
	\vertex (b'02) at (8,3.5) [label=below:$$] {};
	\vertex (b'11) at (8,4.5) [label=below:$$] {};
	\vertex (b'12) at (8,5.5) [label=below:$$] {};
	\vertex (b'i) at (7,5.5) [label=below:$$] {};
	\vertex (b'i1) at (6,5.75) [label=below:$$] {};
	\vertex (b'i2) at (6,5.25) [label=below:$$] {};
	\vertex (b'j) at (7,2.5) [label=below:$$] {};
	\vertex (b'j1) at (6,2.75) [label=below:$$] {};
	\vertex (b'j2) at (6,2.25) [label=below:$$] {};
	\vertex (b'3) at (7,4) [label=below:$$] {};
	\vertex (b'31) at (6,3.75) [label=below:$$] {};
	\vertex (b'32) at (6,4.25) [label=below:$$] {};
	\node (c') at (6,4.1) [label=above:{\footnotesize${v'_{_i}}$}] {};

\node (v) at (6.35,5) [label=left:$\vdots$] {};
\node (v) at (6.35,3.5) [label=left:$\vdots$] {};
\node (v') at (8.1,2.5) [label=left:$\cdots$] {};
\node (v') at (8.1,5.5) [label=left:$\cdots$] {};
\node (v') at (8.1,4) [label=left:$\cdots$] {};

	\path 
		(r0) edge (r1)
		(b01) edge (r0)
		(b02) edge (r0)
		(r1) edge (b11)
		(r1) edge (b12)
		(bi1) edge (bi)
		(bi2) edge (bi)
		(bj1) edge (bj)
		(bj2) edge (bj)
		(b31) edge (b3)
		(b32) edge (b3)
		(r'0) edge (r'1)
		(b'01) edge (r'0)
		(b'02) edge (r'0)
		(r'1) edge (b'11)
		(r'1) edge (b'12)
		(b'i1) edge (b'i)
		(b'i2) edge (b'i)
		(b'j1) edge (b'j)
		(b'j2) edge (b'j)
		(b'31) edge (b'3)
		(b'32) edge (b'3)
		(b'32) edge[color=red!60] (b32);
\end{tikzpicture}\]
\caption{\label{fig:treecylinder2} The graph $\overline{\mathrm{T}}_{_{3,h,i}}$ (See Definition~\ref{def:treecyl}).}	
\end{figure}

\begin{defin}{\label{def:treecyl}{\bf Tree cylinders} \cite{madani}
	
	A {\it tree-cylinder} in general is a cylinder whose bases are identical vertex-labeled 
	copies of a tree for which there exists only a unique edge joining one leaf of the right base to another leaf of the left one. In what follows we will only be dealing 
	with bsymmetric regular tree-cylinders, 
	${\mathrm{T}}^{^{\bullet}}_{_{k,h,i}}$ and $\overline{\mathrm{T}}_{_{k,h,i}}$ defined as,
	\begin{itemize}
	\item ${\mathrm{T}}^{^{\bullet}}_{_{k,h,i}}$: For $0 \leq i < k\times (k-1)^{^{h-1}}$,  ${\mathrm{T}}^{^{\bullet}}_{_{k,h,i}}$, is a cylinder whose bases are identical copies of the {\it complete rooted} $k$-regular tree 
	${\mathrm{T}}^{^{\bullet}}_{_{k,h}}$ of height $h$ in which the leaves are numbered consecutively and there is only one edge joining the $i$th leaves of both bases (see Figure~\ref{fig:treecylinder1}). 
	
	\item $\overline{\mathrm{T}}_{_{k,h,i}}$: For $0 \leq i < 2(k-1)^{^{h}}$, $\overline{\mathrm{T}}_{_{k,h,i}}$, is a cylinder whose bases are identical copies of the {\it complete unrooted} $k$-regular tree 
	$\overline{\mathrm{T}}_{_{k,h}}$ of height $h$ in which the leaves are numbered consecutively and there is only one edge joining the $i$th leaves of both bases (see Figure~\ref{fig:treecylinder2}). 
	\end{itemize}

    Note that, by definition, both ${\mathrm{T}}^{^{\bullet}}_{_{k,h,i}}$ and $\overline{\mathrm{T}}_{_{k,h,i}}$ are 
	 bsymmetric cylinders.
    As shown in \cite{madani}, tree-cylinders can be used as sparsifiers, in the sense that they can be used in a cylindrical construction that reduces the degree and increases 
    the girth of the base graph. 
}\end{defin}

	Hereafter, we always assume that the corresponding adjacency matrices
	${T}^{^{\bullet}}_{_{k,h}}$ and $\overline{T}_{_{k,h}}$ are formed
	based  on the numbering for which there is a priority for lower levels and at each level the vertices appear consecutively from left to the right. Note that in this setting the adjacency matrix ${T}^{^{\bullet}}_{_{k,h}}$ can also be defined recursively as 
$${T}^{^{\bullet}}_{_{k,h}}=\left[
\begin{array}{cc}
{T}^{^{\bullet}}_{_{k,h-1}}&C_{_h}\\ 
C_{_h}^*&0
\end{array}\right],$$
in which $C_{_h}$ is the adjacency submatrix corresponding to the leaves of 
${\mathrm{T}}^{^{\bullet}}_{_{k,h}}$. Clearly, there is a similar recursive formula for $\overline{T}_{_{k,h}}$ too.

Since in what follows we will be needing to refer to some special cyclic decompositions of complete graphs, let us consider a complete graph $\mathrm{K}_{_{n}}$ on the vertex set $V(\mathrm{K}_{_{n}})\isdef \{0,1,\cdots,n-1\}$ along with its spanning subgraphs 
$\mathrm{C}_{_{n}}(k)$ already defined in Example~\ref{exm:igraph}.
Note that if $n$ and $k$ 
are relatively prime integers, then 
 $\mathrm{C}_{_{n}}(k)$ is a cycle. Also, if $n > 2$ is a prime integer, then the set containing $t=\frac{n-1}{2}$ of such subgraphs for different 
values of $1 \leq k \leq t$, constitute a commutative $t$-decomposition of $\mathrm{K}_{_{n}}$.

\begin{defin}{{\bf tree-mixing} \label{def:btmixing}
	
	Consider $\mathrm{T}$, a $3$-regular tree of height $h$, and assume that we have assigned extra labels $a_{_{i}} \in \mathbb{F}[x]$ for 
	the $i$th leaf at the $h$th level for $0 \leq i \leq \Lambda_{_{h}}-1$,
	where $\Lambda_{_{\ell}}$ is the number of vertices at the $\ell$th level. A 
	{\it tree mixing} of the ordered list $(a_{_{0}},a_{_{1}},\cdots,a_{_{\Lambda_{_{h}}-1}})$ is a recursive process of assigning labels
	in $\mathbb{F}(x)$ (the field of rational fractions of $\mathbb{F}[x]$) to the vertices of $\mathrm{T}$ starting from the leaves towards the root, in such a way that the label assigned to the vertex $v$ whose siblings have labels $b$
	and $c$ is equal to $x-(b^{-1}+c^{-1})$(note that for a rooted tree $T$,the label assigned to the root $v$ is equal to $x-(b^{-1}+c^{-1}+d^{-1})$ where the siblings of $v$ have labels $b$, $c$ and $d$). 
	In this setting, the product of the labels of vertices at each level $0 \leq \ell \leq h$ is called the {\it resultant} of that level denoted by $R_{_{\mathrm{T}}}(\ell)$,
	and the product of resultants of all levels, 
	$$R(\mathrm{T})\isdef\displaystyle{\prod_{\ell=0}^{h}}\ R_{_{\mathrm{T}}}(\ell),$$ is  
	defined to be the {\it resultant} of the labeled tree 
	$\mathrm{T}(a_{_{0}},a_{_{1}},\cdots,a_{_{\Lambda_{_{h}}-1}})$ itself as the final result of the tree-mixing.
}\end{defin}

Now, let us concentrate on an application of Theorem~\ref{thm:main} for a very special case using tree-cylinders to generate sparse $3$-regular graphs, which also can be considered as a variant of Example~\ref{exm:igraph}. A byproduct of the following proof is the fact that in some special cases the resultant of a tree-mixing is actually a polynomial in $\mathbb{F}[x]$. 

\begin{pro}\label{pro:treecyl}

Given integers $n,t$ and $1 \leq k_{_{i}} \leq \frac{n-1}{2}$ for $0 \leq i < t$,
fix ${\cal C} \isdef (\mathrm{C}_{_{n}}(k_{_0}),\cdots,\mathrm{C}_{_{n}}(k_{_{t-1}}))$
as a $t$-decomposition of $\mathrm{C}_{_{n}}(k_{_0},\cdots,k_{_{t-1}})$.

\begin{itemize}
\item[{\rm a)}] Let $h \geq 1$ be an integer, $t= 2^{h+1}$, and
$\overline{{\cal H}}\isdef(\overline{\mathrm{T}}_{_{3,h,0}},\overline{\mathrm{T}}_{_{3,h,1}},\cdots,\overline{\mathrm{T}}_{_{3,h,t-1}}).$
 If $\overline{\Theta}_{_j}$ is a diagonal matrix whose diagonal entries are zero for any vertex which is not a leaf and is equal to $2\cos \frac{2\pi j k_{_{i}}}{n}$ for the $i$th leaf, then
$$\phi({\cal C}{\boxtimes}\overline{{\cal H}},x)=\prod_{j=1}^n \phi \left(\overline{T}_{_{3,h}}+ \overline{\Theta}_{_j},x\right)=\prod_{j=1}^n R(\overline{T}^{^{j}}_{_{3,h}}) \frac{R_{_{\overline{T}^{^{j}}_{_{3,h}}}}(0)-1}
{R_{_{\overline{T}^{^{j}}_{_{3,h}}}}(0)}$$
 in which ${\overline{T}^{^{j}}_{_{3,h}}}$ is an unrooted binary labeled tree (see Definition~{\rm \ref{def:treecyl}}) whose $i$th leaf is labeled by $\left(x-2\cos \frac{2\pi j k_{_{i}}}{n}\right).$

\item[{\rm b)}] Let $h \geq 1$ be an integer, $t=3\times 2^{h-1}$, and
${\cal H}^{^{\bullet}}\isdef(\mathrm{T}^{^{\bullet}}_{_{3,h,0}},\mathrm{T}^{^{\bullet}}_{_{3,h,1}},\cdots,\mathrm{T}^{^{\bullet}}_{_{3,h,t-1}}).$
 If $\Theta^{^{\bullet}}_{_j}$ is a diagonal matrix whose diagonal entries are zero for any vertex which is not a leaf and is equal to $2\cos \frac{2\pi j k_{_{i}}}{n}$ for the $i$th leaf, then
$$\phi({\cal C}{\boxtimes}{\cal H}^{^{\bullet}},x)=\prod_{j=1}^n \phi \left({T^{^{\bullet}}}_{_{3,h}}+ \Theta^{^{\bullet}}_{_j},x\right)=\prod_{j=1}^nR(T^{^{\bullet j}}_{_{3,h}})$$
 in which ${\mathrm{T}^{^{\bullet j}}_{_{3,h}}}$ is a rooted binary labeled tree (see Definition~{\rm \ref{def:treecyl}}) whose $i$th leaf is labeled by $\left(x-2\cos \frac{2\pi j k_{_{i}}}{n}\right).$

\end{itemize}
\end{pro}
\begin{proof}{The proof is essentially a consequence of Theorem\ref{thm:main}(b)
	along with the following fact on the existence of a recursive procedure 
	to compute the target determinant which is equal to tree-mixing. 
	To see this, let $\mathrm{T}_{_{h}}$ be a binary tree of height $h$ and consider the following target determinant for a diagonal matrix $\Theta_{_j}$,
$$\det(xI-(T_{_{h}}+ \Theta_{_j})=\det\left[
\begin{array}{cc}
xI-T_{_{h-1}}&C_{_h}\\ 
C_{_h}^*&xI-\Theta_{_j}
\end{array}\right],$$
which is  equivalent to
 $$\det(xI-\Theta_{_j})\det(xI-T_{_{h-1}}-C_{_h}(xI-\Theta_{_j})^{^{-1}}C_{_h}{^*}),$$
 assuming that $x$ is not in the spectrum of $\Theta_{_j}$.
Then, it is not hard to verify that $C_{_h}(xI-\Theta_{_j})^{^{-1}}C_{_h}^*$ is also a 
diagonal matrix whose diagonal entries are the sum of two consecutive entries of $(xI-\Theta_{_j})^{^{-1}}$. This, clearly, shows that the recursive procedure matches the tree-mixing process for the initial state labeling by the diagonal of $xI-\Theta_{_j}$ where the only difference for cases $(a)$ and $(b)$ lies in the last stage.

Note that using this recursion one ends up with a polynomial which is equivalent to the characteristic polynomial of the cylindrical construct outside of a set of finite size.
This, implies that the polynomial is, actually, the characteristic polynomial itself. 
}\end{proof}

In the next proposition the $n$th term is explained in more detail,
where the parts of the methods are
 borrowed from a similar study in \cite{strang}.

\begin{pro}\label{pro:uniformterm}

Let $p_{_{n}}(x)$ be a sequence of polynomials which are defined recursively as,
$$p_{_{0}}(x)=1, p_{_{1}}(x)=x-2, \ {\rm and} \  p_{_{n+2}}(x)=xp_{_{n+1}}(x)-2p_{_{n}}(x).$$
Then,
\begin{itemize}
\item [{\rm a)}] We have 
$$p_{_{n}}(x)=\frac{1}{2(\alpha-\beta)}(\frac{1}{\beta^{^{n+1}}}-\frac{1}{\alpha^{^{n+1}}}-2(\frac{1}{\beta^{^{n}}}-\frac{1}{\alpha^{^{n}}}))$$
in which  $\alpha$ and $\beta$ are the roots of $1-xt+2t^{^{2}}=0$ defined as 
$$\alpha=\frac{x+\sqrt{x^2-8}}{4}, \quad \beta=\frac{x-\sqrt{x^2-8}}{4}.$$

\item [{\rm b)}] The set $\{p_{_{n}}(x)\}_{_{n>1}}$ constitute a family of orthogonal
polynomials. Roots of $p_{_{n-1}}(x)$ interlaces those of $p_{_{n}}(x)$
for $n > 1$, and moreover, if $$\mu^{^{n}}_{_{1}}\geq\mu^{^{n}}_{_{2}}\geq\cdots\geq\mu^{^{n}}_{_{n}}$$
 are roots of $p_{_{n}}(x)$ and we define $\eta^{^{n-1}}_{_{j}}=2\cos(\frac{j\pi}{n})$, for $j=1,2,\ldots,n-1$, then
$$\mu^{^{n}}_{_{1}}\geq\eta^{^{n-1}}_{_{1}}\geq\mu^{^{n}}_{_{2}}\geq\eta^{^{n-1}}_{_{2}}\geq\cdots\geq\eta^{^{n-1}}_{_{n-1}}\geq\mu^{^{n}}_{_{n}}.$$

\item [{\rm c)}] Within the setup of Proposition~{\rm \ref{pro:treecyl}} we have 
$$\phi \left(\overline{T}_{_{3,h}}+ \overline{\Theta}_{_n},x\right)= (\displaystyle\prod_{i=1}^{h}p_{_{i}}(x)^{^{\frac{\overline\Lambda_{_{h-i+1}}}{2}}})(p_{_{h+1}}(x)^{^{2}}-p_{_{h}}(x)^{^{2}}),$$ 
and 
$$\phi \left(T^{^{\bullet}}_{_{3,h}}+ \Theta^{^{\bullet}}_{_n},x\right)=(\displaystyle\prod_{i=1}^{h-1}p_{_{i}}(x)^{^{\frac{\Lambda_{_{h-i+1}}^{^{\bullet}}}{2}}})p_{_{h}}(x)^{^{2}}(xp_{_{h}}(x)-3p_{_{h-1}}(x)),$$ 
where 
${\overline\Lambda}_{_{\ell}}$ and $\Lambda_{_{\ell}}^{^{\bullet}}$ are the number of vertices in $\ell$th levels of   $\overline{T}_{_{3,h}}$ and ${T}^{^{\bullet}}_{_{3,h}}$, respectively.

\end{itemize}

\end{pro}
\begin{proof}{To prove part (a) let $G(x,t)\isdef\displaystyle\sum_{n=0}^\infty p_{_{n}}(x)t^{^{n}}$. Then,
	$$p_{_{n+2}}(x)t^{^{n}}=xp_{_{n+1}}(x)t^{^{n}}-2p_{_{n}}(x)t^{^{n}}$$
	implies that
	$$\frac{1}{t^{^2}}\displaystyle\sum_{n=0}^\infty p_{_{n+2}}(x)t^{^{n+2}}=\frac{x}{t}
	\displaystyle\sum_{n=0}^\infty
	p_{_{n+1}}(x)t^{^{n+1}}-2\displaystyle\sum_{n=0}^\infty
	p_{_{n}}(x)t^{^{n}}$$
	and consequently,
	$$\frac{1}{t^{^{2}}}(G(x,t)-1-(x-2)t)=\frac{x}{t}(G(x,t)-1)-2G(x,t)$$
	showing that
	$$(\frac{1}{t^{^{2}}}-\frac{x}{t}+2)G(x,t)=\frac{1}{t^{^{2}}}-\frac{2}{t}$$
	and
	$$G(x,t)=\frac{1-2t}{(1-xt+2t^{^{2}})}.$$
	Now if $1-xt+2t^{^{2}}=0$, then
	$$\alpha=\frac{x+\sqrt{x^2-8}}{4}, \beta=\frac{x-\sqrt{x^2-8}}{4}$$
	and we have,
	$$G(x,t)=\frac{1-2t}{2(\alpha-\beta})(\frac{1}{t-\alpha}-\frac{1}{t-\beta}).$$
	Therefore,
	$$G(x,t)=\frac{1-2t}{2(\alpha-\beta)}(\frac{1}{\beta}\displaystyle\sum_{n=0}^\infty\frac{t^{^{n}}}{\beta^{^{n}}}-\frac{1}{\alpha}\displaystyle\sum_{n=0}^\infty\frac{t^{^{n}}}{\alpha^{^{n}}})$$
	where the coefficient of $t^{^n}$, $p_{_{n}}(x)$, is
	$$p_{_{n}}(x)=\frac{1}{2(\alpha-\beta)}(\frac{1}{\beta^{^{n+1}}}-\frac{1}{\alpha^{^{n+1}}}-2(\frac{1}{\beta^{^{n}}}-\frac{1}{\alpha^{^{n}}})).$$
	
For part (b), first note that 
the sequence  of polynomials $\{p_{_{n}}(x)\}_{_{n > 1}}$ satisfy a three-term recurrence relation, and consequently, by Favard's theorem they constitute an  orthogonal sequence of polynomials for which
the roots of $p_{_{n-1}}(x)$ interlace those of $p_{_{n}}(x)$ \cite{orth}.

On the other hand, note that
 $p_{_{n}}(x)$ is equal to the characteristic polynomial of the following $n$ by $n$ tridiagonal  matrix 
$$
\tau(n)\isdef\left[		\begin{array}{cccc}
2&1&&\\ 
2&0&\ddots&\\ 
&\ddots&\ddots&1\\ 
&&2&0
\end{array}\right]$$
which is similar to 
$$
{\hat\tau}(n)\isdef\left[\begin{array}{cccc}
2&\sqrt{2}&&\\ 
\sqrt{2}&0&\ddots&\\ 
&\ddots&\ddots&\sqrt{2}\\ 
&&\sqrt{2}&0
\end{array}\right].$$

Hence, if we consider the principal submatrix, $\hat\tau_{_{1,1}}(n)$, obtained by removing the first row and the first column, then its characteristic polynomial is equal to $(\sqrt{2})^{^{n-1}}U_{_{n-1}}(\frac{x}{2\sqrt{2}})$
and  this implies that the eigenvalues of $\hat\tau_{_{1,1}}$ are equal to $\eta^{^{n-1}}_{_{j}}=2\cos(\frac{j\pi}{n})$, for $j=1,2,\ldots,n-1$.
Hence, by the interlacing theorem we have 
$$\mu^{^{n}}_{_{1}}\geq\eta^{^{n-1}}_{_{1}}\geq\mu^{^{n}}_{_{2}}\geq\eta^{^{n-1}}_{_{2}}\geq\cdots\geq\eta^{^{n-1}}_{_{n-1}}\geq\mu^{^{n}}_{_{n}}.$$

It is also instructive to note that, as a well-known fact, the eigenvalues of a tridiagonal matrix with positive off-diagonal entries are  real and simple (e.g. see \cite{tridia}).
Therefore, by considering the principal submatrix obtained by removing the last row and the last column, again we rediscover the fact that the roots of $p_{_{n-1}}(x)$  interlace those of $p_{_{n}}(x)$.

To prove part (c), by using part (a) of Propsition~\ref{pro:treecyl} we have

$$\phi \left(\overline{T}_{_{3,h}}+ \overline{\Theta}_{_n},x\right)=R(\overline{T}^{^{n}}_{_{3,h}}) \frac{R_{_{\overline{T}^{^{n}}_{_{3,h}}}}(0)-1}
{R_{_{\overline{T}^{^{n}}_{_{3,h}}}}(0)}.$$
By the definition of the resultant of labeled tree, $R(\overline{T}^{^{n}}_{_{3,h}})=\displaystyle{\prod_{\ell=0}^{h}}\ R_{_{\overline{T}^{^{n}}_{_{3,h}}}}(\ell).$ 
Note that in $h$th level the labels of leaves of  $\overline{T}_{_{3,h}}$ are equal to $p_{_{1}}(x)=x-2$, and hence  $R_{_{\overline{T}^{^{n}}_{_{3,h}}}}(h)=p_{_{1}}(x)^{^{{\overline\Lambda}_{_{h}}}}$. 
The labels of vertices of  $\overline{T}^{^{n}}_{_{3,h}}$ in $(h-1)$th levels are equal to $$x-(\frac{1}{p_{_{1}}(x)}+\frac{1}{p_{_{1}}(x)})=\frac{p_{_{2}}(x)}{p_{_{1}}(x)},$$
and with the similar computation we obtain that the labels of vertices of  $\overline{T}^{^{n}}_{_{3,h}}$ in $(h-i)$th levels are equal
$\frac{p_{_{i+1}}(x)}{p_{_{i}}(x)}.$
 Consequently, $R_{_{\overline{T}^{^{n}}_{_{3,h}}}}(h-i)=(\frac{p_{_{i+1}}(x)}{p_{_{i}}(x)})^{^{{\overline\Lambda}_{_{h-i}}}}.$
Now, since ${\overline\Lambda}_{_{\ell}}=2{\overline\Lambda}_{_{\ell-1}}$, we can conclude that
 $$R(\overline{T}^{^{n}}_{_{3,h}}) \frac{R_{_{\overline{T}^{^{n}}_{_{3,h}}}}(0)-1}
 {R_{_{\overline{T}^{^{n}}_{_{3,h}}}}(0)}=(\displaystyle\prod_{i=1}^{h}p_{_{i}}(x)^{^{\frac{\overline\Lambda_{_{h-i+1}}}{2}}})(p_{_{h+1}}(x)^{^{2}}-p_{_{h}}(x)^{^{2}}).$$
The similar fact for the rooted tree $T^{^{\bullet}}_{_{3,h}}$ can be obtained analogously.
}\end{proof}

\begin{exm}{\label{exm:coxeter}{\bf The Coxeter cylinder}
	
	The cylinder $\mathrm{T}^{^\bullet}_{_{3,1,1}}$ is defined as the Coxeter cylinder in \cite{dama,madani},
	the main reason being the fact that the well-known Coxeter graph can be represented as the cylindrical construction ${\cal K}_{_{7}}\boxtimes{\cal H}$ in which 
	$${\cal H}\isdef \{\mathrm{T^{^\bullet}}_{_{3,1,i}} \ | \ 0 \leq i \leq 2\} \ \ {\rm and } \ \ 
	{\cal K}_{_{7}} \isdef (\mathrm{C}_{_{7}}(1),\mathrm{C}_{_{7}}(2),\mathrm{C}_{_{7}}(3)).$$

	For this case, note that by Proposition~\ref{pro:treecyl}(b) we have 
	$h=1, t=3$ and $n=7$ where  the characteristic polynomial is as follows,
	$$	\prod_{j=1}^7\det\left[
		\begin{array}{cccc}
		x&-1&-1&-1\\ 
		-1&x-2\cos(\frac{2\pi j}{7})&0&0\\ 
		-1&0&x-2\cos(\frac{4\pi j}{7})&0\\ 
		-1&0&0&x-2\cos(\frac{6\pi j}{7})
		\end{array}\right]$$
		$$=(x-3)(x -2 )^2 (x +1) ((x-2)(x+1)(x^2+2x-1))^6.$$

(For the computation, note that $2\cos(\frac{2\pi j}{7})=\zeta_{_7}^{^j}+\zeta_{_7}^{^{-j}}$ and that the sum of $n$th roots of unity is $0$.)

}\end{exm}

\subsection{A class of highly symmetric graphs}\label{sec:highsymmetry}

The contents of this section is essentially a generalization of Example~\ref{exm:coxeter}
where we introduce a class of highly symmetric graphs of type already introduced in Proposition~\ref{pro:treecyl} for which we have a relatively large multiplicity for the eigenvalues.

\begin{defin}{\label{def:grouplabeling}{\bf Cyclic group labeling of binary trees}\\
	
	Consider $\mathrm{T}_{_{h}}$, a complete binary tree of height $h$, of the binary monoid using concatenation, which is a labeled rooted tree with the root $r$ (representing the word of length zero at level zero)
	and $h$ levels of vertices where there are $2^\ell$ vertices at the $\ell$th level, and 
	moreover, the vertices of this complete binary tree are labeled using concatenation with 
	$0$ and $1$ according to a left-justified rule, where
	for each vertex with the label $s$, its left sibling has the label $s0$ and its right sibling has the label $s1$ (see Figure~\ref{fig:treegroup}). Note that, in this way, all binary numbers of $\ell$ bits appear at the $\ell$th level in their natural order. Hereafter, we always assume that the vertex set of a complete binary tree  of height $h$ is the set of all binary strings of length less than or equal to $h$,
	where the vertex $s$ is a sibling of a vertex $\tilde{s}$ if 
	and only if $|\tilde{s}|=|s|+1$ and $s$ is a prefix of $\tilde{s}$.
	
	Moreover, if $s=s_{_{m}} \cdots s_{_{0}}$ is a binary string of length $m+1$, then its {\it reverse } is defined as 
	$\hat{s} \isdef s_{_{0}} \cdots s_{_{m}}$ and its {\it corresponding integer} is defined as 
	$$(s)_{_{2}} \isdef s_{_{m}} 2^m + \cdots +s_{_{1}} 2 + s_{_{0}}.$$ 
	
	Let $\Gamma = \langle a \ | \ a^N=1\rangle $ be a multiplicative cyclic group of order $N$ generated by $a$, and let ${\cal C}(\Gamma)$ be the whole class 
	of cosets of subgroups of $\Gamma$. Then, the mapping 
	$\gamma:V(\mathrm{T}_{_{h}}) \to {\cal C}(\Gamma)$ defined 
	as
	$$\gamma(s) \isdef a^{(\hat{s})_{_{2}}}\langle a^{2^{^{|s|}}} \rangle$$
	is called a cyclic group labeling of $\mathrm{T}_{_{h}}$ by 
	$\Gamma$ (See Figure~\ref{fig:treegroup}) for an example of such labeling).
	
}\end{defin}

\begin{figure}[t]

 \[\begin{tikzpicture}
 \vertex (r) at (3.5,3) [label=left:$$] {};
 \vertex (r1) at (1.5,2) [label=above:\scriptsize$0$] {};
 \vertex (r2) at (5.5,2) [label=above:\scriptsize$1$] {};
 \vertex (b11) at (0.5,1) [label=above:\scriptsize$00$] {};
 \vertex (b12) at (2.5,1) [label=above:\scriptsize$01$] {};
 \vertex (b21) at (4.5,1) [label=above:\scriptsize$10$] {};
 \vertex (b22) at (6.5,1) [label=above:\scriptsize$11$] {};
 \vertex (b111) at (0,0) [label=below:\scriptsize$000$] {};
 \vertex (b112) at (1,0) [label=below:\scriptsize$001$] {};
 \vertex (b121) at (2,0) [label=below:\scriptsize$010$] {};
 \vertex (b122) at (3,0) [label=below:\scriptsize$011$] {};
 \vertex (b211) at (4,0) [label=below:\scriptsize$100$] {};
 \vertex (b212) at (5,0) [label=below:\scriptsize$101$] {};
 \vertex (b221) at (6,0) [label=below:\scriptsize$110$] {};
 \vertex (b222) at (7,0) [label=below:\scriptsize$111$] {};

 %%%%%%%%%%%%%%%%%%%%%%%%%%%%%%%%%%%%%%%%%%%%%%%%%%%%%%%%%%%%%%%%

 \path 
 (r) edge (r1)
 (r) edge (r2)
 (b21) edge (r2)
 (b22) edge (r2)
 (b12) edge (r1)
 (b11) edge (r1)
 (b111) edge (b11)
 (b112) edge (b11)
 (b121) edge (b12)
 (b122) edge (b12)
 (b211) edge (b21)
 (b212) edge (b21)
 (b221) edge (b22)
 (b222) edge (b22);
 \vertex (r) at (11.5,3) [label=left:\tiny$\langle a\rangle$] {};
 \vertex (r1) at (9.5,2) [label=above:\tiny$\langle a^{^2}\rangle$] {};
 \vertex (r2) at (13.5,2) [label=above:\tiny$a\langle a^{^2}\rangle$] {};
 \vertex (b11) at (8.5,1) [label=above:\tiny$\langle a^{^4}\rangle$] {};
 \vertex (b12) at (10.5,1) [label=above:\tiny$\,\,\,\,\,\,a^{^2}\langle a^{^4}\rangle$] {};
 \vertex (b21) at (12.5,1) [label=above:\tiny$a\langle a^{^4}\rangle\,\,\,\,\,\,$] {};
 \vertex (b22) at (14.5,1) [label=above:\tiny$\,\,\,\,\,\,a^{^3}\langle a^{^4}\rangle$] {};
 \vertex (b111) at (8,0) [label=below:\tiny$\langle a^{^8}\rangle$] {};
 \vertex (b112) at (9,0) [label=below:\tiny$a^{^4}\langle a^{^8}\rangle$] {};
 \vertex (b121) at (10,0) [label=below:\tiny$a^{^2}\langle a^{^8}\rangle$] {};
 \vertex (b122) at (11,0) [label=below:\tiny$a^{^6}\langle a^{^8}\rangle$] {};
 \vertex (b211) at (12,0) [label=below:\tiny$a\langle a^{^8}\rangle$] {};
 \vertex (b212) at (13,0) [label=below:\tiny$a^{^5}\langle a^{^8}\rangle$] {};
 \vertex (b221) at (14,0) [label=below:\tiny$a^{^3}\langle a^{^8}\rangle$] {};
 \vertex (b222) at (15,0) [label=below:\tiny$a^{^7} \langle a^{^8}\rangle$] {};

 %%%%%%%%%%%%%%%%%%%%%%%%%%%%%%%%%%%%%%%%%%%%%%%%%%%%%%%%%%%%%%%%

 \path 
 (r) edge (r1)
 (r) edge (r2)
 (b21) edge (r2)
 (b22) edge (r2)
 (b12) edge (r1)
 (b11) edge (r1)
 (b111) edge (b11)
 (b112) edge (b11)
 (b121) edge (b12)
 (b122) edge (b12)
 (b211) edge (b21)
 (b212) edge (b21)
 (b221) edge (b22)
 (b222) edge (b22)
 ;
 \end{tikzpicture}\]
 
\caption{\label{fig:treegroup} An example of cyclic group labeling of a binary tree $\mathrm{T}_{_{3}}$ by $\Gamma = \langle a \ | \ a^{16}=1\rangle.$}	
\end{figure}
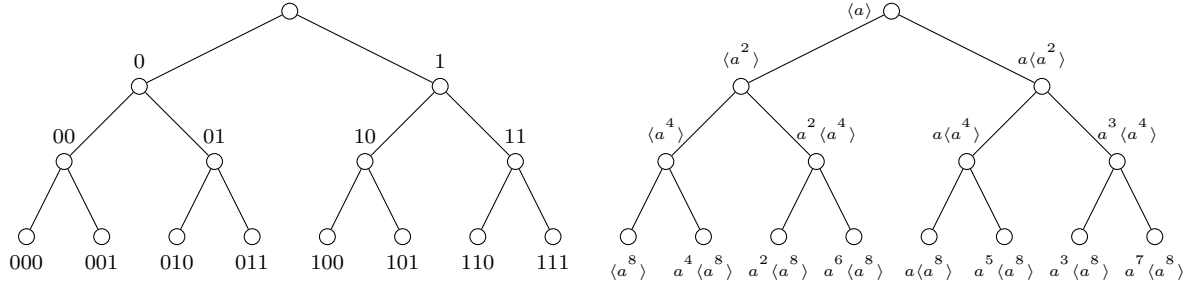

\begin{exm}{\label{exm:eightleaves}{\bf Highly symmetric graphs using $\overline{\mathrm{T}}_{_{3,h}}$}\\
	
	Let $t=2^{^{h+1}}$ such that $n=2t+1$ is a prime number. Consider the multiplicative group $\Gamma = \mathbb{Z}^*_{_{n}}$ of the field $\mathbb{Z}_{_{n}}$ and let $a$ be its generator for which $a^{2t}=1$.
	Also, let $\gamma$ be the cyclic group labeling of a complete binary tree of height $h+1$ by $\Gamma$ and identify this labeling on 
	${\overline{\mathrm{T}}}_{_{3,h}}$ by just forgetting the root.
	
	It is easy to verify that for this labeling the coset which is assigned to a vertex $s$ is exactly the union of the cosets of size two which are assigned to the leaves of the subtree below $s$.  
	Also, it is straight forward to see that multiplying all labels of the labeled tree ${\overline{\mathrm{T}}}_{_{3,h}}$ with the labeling $\gamma$ 
	by an arbitrary element $g \in \Gamma$  gives rise to an automorphism of this labeled tree.
		
	For the leaf $i$, $0\leq i\leq t-1$, let $\gamma(i)^*$ be the element of the coset $\gamma(i)=\{\gamma(i)^*,-\gamma(i)^*\}$ 
	which is greater than or equal to $1$ and less than or equal to $t$, and define $k_{_i} \isdef \gamma(i)^*$. Then, consider the 	
	cyclic $t$-decomposition of $\mathrm{K}_{_{n}}$ as 
	$${\cal K}_{_{n}} \isdef (\mathrm{C}_{_{n}}(k_{_0}),\mathrm{C}_{_{n}}(k_{_1}),\cdots,\mathrm{C}_{_{n}}(k_{_{t-1}})),$$ for which  $$\{1,2,\cdots,t\}=\{k_{_{0}},k_{_{1}},\cdots,k_{_{t-1}}\}.$$
		
	Then we may construct the graph 
	${\cal K}_{_{n}}\boxtimes{\overline{\cal H}}$ in which
    $$\overline{{\cal H}}\isdef(\overline{\mathrm{T}}_{_{3,h,0}},\overline{\mathrm{T}}_{_{3,h,1}},\ldots,\overline{\mathrm{T}}_{_{3,h,t-1}})$$
    and we have assigned the cylinder ${\overline{\mathrm{T}}}_{_{3,h,i}}$ to  the edge set of the subgraph $\mathrm{C}_{_{n}}(k_{_i})$ of $K_{_{n}}$.
    
    Now,  by Proposition~\ref{pro:treecyl} and the fact that multiplication of the labels of the tree by a fixed element of $\Gamma$ is an automorphism of the labeled tree itself, we may conclude that all
    term $\phi({\overline{\mathrm{ T}}}_{_{h,3}}+{\overline \Theta}_{_{j}},x)$ are equal for $1 \leq j < n$, and consequently,
    $$\phi({\cal K}_{_{n}}\boxtimes{\overline{\cal H}},x)=\phi({\overline{\mathrm{
    			T}}}_{_{h,3}}+{\overline \Theta}_{_{1}},x)^{^{2^{^{h+2}}}}\phi({\overline{\mathrm{ T}}}_{_{h,3}}+{\overline \Theta}_{_{n}},x).$$
			Note that ${\overline \Theta}_{_{n}}$ is a diagonal matrix whose diagonal entries are zero for any vertex which is not a leaf and is equal to $2$ for the $i$th leaf.
			    
	As an interesting special case, let $h=2$ and we have the $3$-regular graph  ${\cal K}_{_{17}}\boxtimes{\overline{\cal H}}$ on 238 vertices with the 
	characteristic polynomial
	$$(x-3)(x-2)^{^4}(x^2-2x-2)^{^2}(x^2-2)^{^{17}}(x^{^3}-x^{^2}-6x+2)\times$$
	$$ (x^{^{12}}+x^{^{11}}-18x^{^{10}}-16x^{^{9}}+116x^{^8}+88x^{^7}-319x^{^6}-189x^{^5}+345x^{^4}+116x^{^3}-116x^{^2}-12x+7)^{^{16}}$$
	which has an automorphism group of order $262$.

}\end{exm}

\begin{exm}{\label{exm:sixleaves}{\bf Highly symmetric graphs using ${\mathrm{T}}^{^{\bullet}}_{_{3,h}}$ }\\
	
		Let $t=3 \times 2^{^{h-1}}$ such that $n=2t+1$ is a prime number. Consider the multiplicative group $\Gamma = \mathbb{Z}^*_{_{n}}$ of the field $\mathbb{Z}_{_{n}}$ and let $a$ and $b$ be its generators for which $b^3=1$ and $a^{2t/3}=1$.
		
		Label the root of ${\mathrm{T}}^{^{\bullet}}_{_{3,h}}$
		by $\Gamma$ itself and label each of its siblings as $s$ with $|s|=1$
		by the coset $b^s \langle a \rangle$.
		Also, let $\gamma$ be the cyclic group labeling of a complete binary tree of height $h-1$ under the vertex $0$ 	
		by the cyclic group $\langle a \rangle$ and 
		for the subtrees below the vertices $1$ and $2$ apply translation (i.e. multiplying the cosets) by $b$ and $b^2$, respectively, of the labeling $\gamma$ of the subtree under $0$.
		
		Again, for the leaf $i$,  let $\gamma(i)^*$ be the element of the coset $\gamma(i)=\{\gamma(i)^*,-\gamma(i)^*\}$ 
		which is greater than or equal to $1$ and less than or equal to $t$, and define $k_{_i} \isdef \gamma(i)^*$, and consider the 	
		cyclic $t$-decomposition of $\mathrm{K}_{_{n}}$ as 
		$${\cal K}_{_{n}} \isdef (\mathrm{C}_{_{n}}(k_{_0}),\mathrm{C}_{_{n}}(k_{_1}),\cdots,\mathrm{C}_{_{n}}(k_{_{t-1}}))$$ for which  $$\{1,2,\cdots,t\}=\{k_{_{0}},k_{_{1}},\cdots,k_{_{t-1}}\}.$$
		
	   Now, by Proposition~\ref{pro:treecyl} we have 
	   $$\phi({\cal K}_{_{n}}\boxtimes{\cal H}^{^{\bullet}},x)=
	   \phi(T^{^{\bullet}}_{_{h,3}}+\Theta_{_{n}}^{^{\bullet}},x)\phi(T^{^{\bullet}}_{_{h,3}}+\Theta_{_{1}}^{^{\bullet}},x)^{^{3\times2^h}}.$$
	  Note that $\Theta_{_{n}}^{^{\bullet}}$ is a diagonal matrix whose diagonal entries are zero for any vertex which is not a leaf and is equal to $2$ for the $i$th leaf.

	   In particular, for $h=2$ and $n=13$ we have a $3$-regular 
	   Ramanujan graph on $130$ vertices with the following characteristic polynomial
	   $$\phi({\cal K}_{_{13}}\boxtimes{\cal H}^{^{\bullet}},x)=(x-3)(x-1)(x+2)(x-2)^{^3} (x^{^2}-2x-2)^{^2}$$
	  $$\times (x^{^{10}} +
	  x^{^9}-14x^{^8 }- 12x^{^7}+65x^{^6}+45x^{^5}-115x^{^4}-55x^{^3}
	  +69x^{^2}+12x-10)^{^{12}},$$
	   with an automorphism group of size $156$.
	  
}\end{exm}

\section{Concluding remarks}\label{sec:concluding}

In this article we considered the spectra of bsymmetric cylindrical constructs when the 
base-graph has a commutative decomposition. This in our opinion sets another interesting setup where graph decompositions play a crucial role one the one hand, and presents some 
interesting examples where one may have some observations about the behaviour of the spectrum with a control on the structure of the graph itself. We believe that our results show that the uniform term in Proposition~\ref{pro:treecyl} for $j=n$ described 
in detail in Proposition~\ref{pro:uniformterm} is a consequence of commutativity of the decomposition since this condition gives rise to the fact that the all-one vector should be the eigenvector of all subgraphs of the corresponding commutative decomposition.
This becomes more important when one may verify experimentally that this uniform term is the basic source of eigenvalues outside of the Ramanujan interval, while the number of such eigenvalues for the rest of the terms is relatively very small.

In other words, we believe that our results can be considered as a piece of evidence for the fact that there exists Ramanujan graphs among random sparsifications of the complete graphs by tree-cylinders (e.g. see \cite{mss}). This, in our opinion, serves as a strong source of motivation for the study of such construction, where in particular, random $\pi$-lifts of complete graphs is the specific case that we believe deserves more careful investigation (see \cite{dama}).

From another point of view, our constructions and computations introduce a very specific and important recursive amalgam of polynomials that we believe must be studied in further research for its very interesting properties, since from this more abstract point of view, the recursive procedure presents a way of mixing a couple of polynomials with a control on the spread of the roots of the resultant.
It can be verified that this procedure not only works for the diagonal perturbation of trees but also it goes through for the circulant perturbation of trees too, which may also lead to new constructions.

It worths mentioning that spectral properties of the class of highly symmetric graphs introduced in the last section is closely related to number theoretic properties of
the corresponding parameters and this class of graphs also deserve more investigations 
in future research.

%\bibliography{MyReferences}
%\bibliographystyle{plain}
\end{document}